\newcommand{\rmd}{\mathrm{d}}
\newcommand{\rmH}{\mathrm{H}}
\newcommand{\rmL}{\mathrm{L}}
\newcommand{\rmP}{\mathrm{P}}
\newcommand{\rmW}{\mathrm{W}}
\newcommand{\bbN}{\mathbb{N}}
\newcommand{\bbR}{\mathbb{R}}
\newcommand{\bbU}{\mathbb{U}}
\newcommand{\calP}{\mathcal{P}}
\newcommand{\calT}{\mathcal{T}}
\newcommand{\bs}{{\scriptscriptstyle{\bullet}}}
\newcommand{\ts}{\textstyle}
\newcommand{\hs}{\raisebox{1pt}{$\scriptstyle \bigstar$}}
\newcommand{\cleq}{\preccurlyeq}
\newcommand{\Poly}{\calP}
\newcommand{\Pminus}{\Poly_{\!\!-}}
\newcommand{\Whitney}{\rmW}
\newcommand{\test}{\prime}
\newcommand{\beq}{\begin{equation}}
\newcommand{\eeq}{\end{equation}}
\newcommand{\mapping}[4]{
\left\{
\begin{array}{rcl}
\displaystyle #1  &\to& #2\\
\displaystyle #3  &\mapsto & #4
\end{array} \right.
}
\newcommand{\mixedright}[8]{
\
\left\{
\begin{array}{llll}
\forall #1  \in #2 \ & #3  & = #4\\
\forall #5  \in #6 \ & #7  & = #8
\end{array}
\right.
}
\definecolor{myyellow}{rgb}{1, 0.85, 0.15}
\definecolor{myorange}{rgb}{0.85, 0.5, 0.15}
\definecolor{mymagenta}{rgb}{0.85, 0.15, 1}
\definecolor{mycyan}{rgb}{0.15, 1, 0.85}
\definecolor{myred}{rgb}{0.7, 0.15, 0.15}
\definecolor{mypurple}{rgb}{0.425, 0.15, 0.425}
\definecolor{myblue}{rgb}{0.15, 0.15, 0.7}
\definecolor{mybluegreen}{rgb}{0.15, 0.425, 0.425}
\definecolor{mygreen}{rgb}{0.15, 0.7, 0.15}
\definecolor{mylightestbrown}{rgb}{0.825, 0.45, 0.225}
\definecolor{mybrown}{rgb}{0.55, 0.3, 0.15}
\definecolor{mydarkbrown}{rgb}{0.275, 0.15, 0.075}
\definecolor{mylightestgray}{rgb}{0.667,0.667,0.667}
\definecolor{mylightgray}{rgb}{0.5,0.5,0.5}
\definecolor{mygray}{rgb}{0.333, 0.333, 0.333}
\definecolor{mydarkgray}{rgb}{0.167,0.167,0.167}
\newcounter{npoint}[section]
\newtheorem{theorem}{Theorem}[section]
\newtheorem{lemma}[theorem]{Lemma}
\newtheorem{proposition}[theorem]{Proposition}
\theoremstyle{remark}
\theoremstyle{definition}
\newtheorem{remark}{Remark}[section]
\newtheorem*{unremark}{Remark}
\newcounter{quest}[exercise]
\newcounter{subquest}[quest]
\newcommand{\solution}[1]{
}
\newcounter{pushlevel}[theorem]
\newcommand{\push}{
\stepcounter{pushlevel}
 \vspace{-1ex} \noindent \hspace{4.5ex} \begin{minipage}[t]{\textwidth*\real{0.99} - 4.5ex}
\mbox{}\hspace{-1ex}\rule[-1.5ex]{.1pt}{1.5ex}\rule{5.3ex}{.1pt}
\vspace{-.65ex}

}
\newcommand{\unpush}{
\end{minipage}
\noindent \mbox{}\hspace{4.2ex}\rule{.1pt}{1.5ex}\rule{5.3ex}{.1pt} \hspace{\stretch{1}}
\noindent
\addtocounter{pushlevel}{-1}

}
\newcounter{theopoint}[theorem]
\newcommand{\tpoint}{
\medskip
\stepcounter{theopoint}
\noindent (\roman{theopoint}) 
}
\numberwithin{equation}{section}
\title{Interpretation of a Discrete de Rham method 
  \\ as a Finite Element System}
\author{Snorre H. Christiansen\thanks{Department of Mathematics, University of Oslo, PO Box 1053 Blindern, NO 0316 Oslo, Norway. email: {\tt snorrec@math.uio.no.}}, Francesca Rapetti\thanks{Université Côte d'Azur, Laboratoire de Mathématiques J.A. Dieudonné, Parc Valrose, F-06108 NICE Cedex 02, France.}}
\date{}
\begin{document}

\maketitle

\begin{abstract}
  We show that the DDR method can be interpreted as defining a computable consistent discrete $\rmL^2$ product on a conforming FES defined by PDEs.  Without modifying the numerical method itself, this point of view provides an alternative approach to the analysis. The conformity and consistency properties we obtain are stronger than those previously shown, even in low dimensions. We can also recover some of the other results that have been proved about DDR, from those that have already been proved, in principle, in the general context of FES. We also bring VEM, the Virtual Element Method, into the discussion.
\end{abstract}

\bigskip

\noindent {\bfseries MSC:} 65N30, 58A12.

\bigskip

\noindent {\bfseries Keywords:} finite elements, differential forms.

\bigskip

\section*{Introduction}
We give a new presentation of the DDR method of \cite{DiPDroRap20, DiPDro23, BonDiPDroHu25}. We interpret it as a numerical method based on discrete spaces defined piecewise by solving certain new Partial Differential Equations (PDEs), on each cell of a mesh. The method is conforming in the sense of producing subcomplexes of natural complexes of Sobolev spaces. Even though the solutions to these new local PDEs are not computable, the numerical method itself is ''fully discrete'' because it allows the explicit computation of enough moments against polynomials, from the degrees of freedom. The method is then consistent in the sense of Strang's lemmas.  This type of consistency and conformity are stronger than those previously proved for DDR. The interpretation also embeds the method in the Finite Element System framework, and allows to leverage the results already obtained in that setting. 

\paragraph{Previous work on FES} The framework of Finite Element Systems (FES) was introduced in \cite{Chr08M3AS} and expanded upon in \cite[\S 5]{ChrMunOwr11} (this independent section is available on arXiv\footnote{\textsc{S. H. Christiansen,} \emph{Finite element systems of differential forms,} arXiv:1006.4779, 2015.}) and subsequent papers. It is an environment in which to define and analyze sequences of spaces of differential forms, that are finite elements in a broad sense. The underlying mesh can consist of cells (in particular polytopes) and the basis functions can be general (not necessarily based on polynomials). Thus one may call them polytopal discrete de Rham sequences, though this expression is given an a priori different meaning below. These spaces were conforming in the sense that the fields were $\rmL^2$-integrable with exterior derivative also $\rmL^2$-integrable, which enforces that traces, in the sence of pullbacks to faces, coincide (they are ''single-valued''). This encodes a type of partial continuity that is common for the mixed finite elements of electromagnetics and fluid flow \cite{BofBreFor13}, such as those of Raviart-Thomas-Nédélec. The latter were extended to differential forms in \cite{Hip99}. A synthesis with extensions to elasticity is provided in Finite Element Exterior Calculus (FEEC) \cite{ArnFalWin06}.

Since then the framework of FES has been generalized to accomodate higher regularity of the fields \cite{ChrHu18} (as in so-called ''Stokes complexes'' relevant to fluid flow) and other types of fields \cite{ChrHu23} (e.g. symmetric tensors and their associated curvature tensors, as in Riemannian geometry). The definition has a natural expression in the language of categories: a FES is a just a contravariant functor from the (poset) category determined by the mesh, to the category of vector spaces (interpreted as spaces of fields on cells). A short summary is provided in \cite{Chr22}. This definition covers all finite elements described according to Ciarlet's definition, including non-conforming ones (such as Morley and Crouzeix-Raviart elements).

This point of view connects finite elements with sheaf theory \cite{Ive86}. Indeed the combinatorial structure of the mesh can be seen as defining a ''site'' with a Grothendieck topology. A contravariant functor from such a site into an algebraic category is a presheaf, which for us represents function spaces than can be both localized (from global to local) and glued back (from local to global).

For the purposes of this paper, however, the definition given in \cite{Chr08M3AS} is sufficient. The goal back then was to define a discrete de Rham finite element sequence on polytopal meshes that mimicked the lowest order mixed finite elements that correspond to Whitney forms. Indeed these were known only for simplices (and products of simplices handled by tensor product constructions). For this presentation we will also refer to the more comprehensive presentation in \cite[\S 5]{ChrMunOwr11}. Short presentations can also be found in later publications, in particular \cite[\S 2]{ChrRap16}.

A high order version of the construction of \cite{Chr08M3AS} yielding minimal spaces (sometimes referred to as ''serendipity spaces'') was given in \cite{Chr10CRAS, ChrGil16}. The method of \cite{Chr08M3AS, ChrGil16} was conceptually based on harmonic extensions of differential forms, which is not in general computable. But with the help of, for instance, mesh refinements and known discrete spaces on the refinement, adequate discrete harmonic extensions were defined, yielding ''fully discrete'' numerical methods with algebraic and analytical properties identical with those of standard mixed finite elements.

By modifying the harmonic extension PDE that underlied the above constructions, one can obtain upwinded complexes, to treat convection dominated diffusion problems \cite{Chr13FoCM,ChrHalSor14}. The framework has also been used to give compact descriptions of several known finite element methods \cite{ChrGil16, ChrRap16}. The generalized Whitney forms introduced in \cite{Chr08M3AS} have also been used to analyze Discrete Exterior Calculus (DEC) in \cite{GuzPot25}.

\paragraph{Other polytopal methods} The Virtual Element Method (VEM), reviewed in \cite{BeiBreMarRus23}, has been developed with similar goals as FES and some similar basic principles, such as definition by recursive extension of the fields, from low dimensional cells. But, crucially, no submesh is used in VEM to define the discretization spaces or the numerical method. The challenge is to reconstruct enough information on the fields from the degrees of freedom, to obtain stable and consistent approximations of the bilinear forms defining the partial differential equation to solve. A connection between the mixed VEM (defined in dimension 3) of \cite{BeiBreMarRus16} and FES (in arbitrary dimension) was made in \cite[\S 2.1]{ChrGil16} and will be used later.  

The Discrete de Rham (DDR) method was introduced in dimension 2 and 3 in \cite{DiPDroRap20} and has since been further developed in several papers. We refer to \cite{BonDiPDroHu25} for an overview and a description of the method valid for differential forms of arbitrary degree in arbitrary dimension. 

The DDR construction was inspired at least in part by Hybrid High Order (HHO) methods. Accordingly, it has been presented as a non-conforming method involving spaces of non-matching fields on cells of all dimension. Non-conformity of the spaces poses major challenges for the analysis of the method, many of which have been addressed in the literature \cite{DiPDro23,DiPDroPit23}. In particular an original notion of ''adjoint consistency'' has been developed.

The close relationship between DDR and VEM has already been pointed out in \cite{BeiDasDiPDro22}. In some versions these methods can be seen to have the same degrees of freedom, and they coincide with those of FES provided in \cite[\S 2.1]{ChrGil16}.

Lowest order numerical methods have been designed on various polytopes for a long time and can be traced back to work of Yee and Arakawa in the 1960ies and 1970ies. Many seem to share a common algebraic approach to the differential operators whereas they have different approaches to the construction of the discrete $\rmL^2$ inner product.

\paragraph{Contribution} In this paper, we reinterpret DDR as a conforming FES equip\-ped with computable consistent approximations of the $\rmL^2$ inner products.  The degrees of freedom of \cite[\S 2.1]{ChrGil16} are used.  This shows that these examples of DDR, VEM and FES are very similar, if not identical, in practice. They seem to differ mainly in terms of interpretation and analysis. The main novelty here is just that a new harmonic extension PDE, modifying the one used in \cite[\S 2.1]{ChrGil16}, is introduced to define the discrete spaces.

This point of view allows us to recover some of the results that have been proved for VEM or DDR 
as special cases of results already proved, in their principle, in the general context of FES.  We summarize:
\begin{itemize}
\item Consistency of the discrete $\rmL^2$ scalar product now follows from the fact that the new local PDE allows the explicit recursive computation of $\rmL^2$ products with enough polynomials, from the degrees of freedom.
\item Cohomological properties developed for FES, especially \cite[Proposition 5.15]{ChrMunOwr11}, now carry over to DDR. This passage from local exactness (on each cell of the mesh), to globally correct cohomology (on the computational domain) illustrates that the existence of unisolvent degrees of freedom on a FES can be interpreted as a softness condition on sheaves (see \cite[\S III.2]{Ive86} for definitions).
\item Uniformly bounded commuting projections, building on \cite{Sch08, Chr07NM}, have been described for FEEC \cite{ArnFalWin06,ChrWin08} and FES \cite[\S 5.3]{ChrMunOwr11}. They enable one to recover Poincaré-Friedrich estimates, which are basic to the stability analysis of mixed methods.
\end{itemize}
In the setting of FES one furthermore obtains refinements in the analysis:
\begin{itemize}
\item Translation estimates are given in \cite[Proposition 5.71]{ChrMunOwr11}, and have applications to the equations of fluid flow.
\item Discrete Rellich compactness, as introduced by Kikuchi for Maxwell \cite[\S IV.19]{Bof10}, holds generally, as proved first in \cite[Corollary 5]{Chr07NM} with applications to the Hodge-Laplace eigenvalue problem on manifolds. See also \cite{ChrWin13IMA} on the relationship of bounded commuting projections to such properties, at an abstract level.
\item Discrete Sobolev injections hold \cite[Propositions 5.69, 5.70]{ChrMunOwr11} and have applications to, for instance, non-linear Maxwell wave equations as in \cite{ChrSch11}. See also \cite{HeHuXu19} on this topic.
\item Fractional order Sobolev estimates are provided in \cite{Chr18}, with applications to the Dirac equation.
\end{itemize}

We insist that in our point of view the method is conforming in the sense that we obtain subspaces of standard Sobolev spaces and that the discrete differential operators (instances of the exterior derivative) are simply the restriction of the ordinary ones. These operators are computable in terms of the chosen degrees of freedom. Taking into account the variational crime arizing from the approximation the $\rmL^2$ inner product is viewed as a separate ingredient in the analysis, addressed with the help of Strang's first lemma.

An example of a FES determined by solving local PDEs and equipped with a computable and consistent bilinear form, has been analyzed in \cite{ChrHal15}. It concerned the Schrödinger equation with a magnetic field (representing an $\bbU(1)$-connection). The local PDEs used on the cells are often related, but not identical, to the global PDE one wants to solve. This aspect can be compared with Trefftz methods which often glue exact local solutions in non-conforming ways \cite{HipMoiPer16}.

Notice that many arguments involve a straightforward scaling combined with an appeal to compactness inspired by \cite[Remark p. 64 and proof p. 69]{ArnFalWin06}. An optimal formulation of such compactness arguments remains to be delineated in the context of FES.

\begin{unremark}
More recent constructions of \emph{local} bounded commuting projections \cite{FalWin14, ArnGuz21} carry over to FES in a very natural way. The approach of \cite{FalWin14} extends to FES of differential forms and the approach of \cite{ArnGuz21} extends to general FES as in \cite{ChrHu23} (the latter approach does not appeal to a wedge product). In fact the FES framework seems to identify exactly the algebraic properties of discretization spaces needed for these constructions. A general analysis of stability and compactness properties of these methods remains, in the context of FES, to be fleshed out. See \cite{ErnGuzPotVoh25} for the state of the art.
\end{unremark}

In a sense we have little to add to the design of the DDR and VEM methods themselves. Many of the defining integrations (by parts or not) against cleverly chosen polynomial forms have of course already been exemplified in the literature, though perhaps not in this order and generality. We hope the FES perspective can contribute usefully to the analysis, leading to various convergence proofs. Besides, by providing a clarified rationale for neglecting certain terms and identifying key underlying algebraic facts (such as short exact sequences of complexes and their long exact sequences of cohomology groups), this presentation can ease the extension of the method in various directions. This could be for instance the above mentioned upwinding problem, or the existing variants of DDR developed for other differential operators \cite{DiPHan24} and regularities \cite{Han23}.

The paper is organized as follows. In section \ref{sec:not} we review notations and provide a reading guide on FES. Section \ref{sec:low} contains the construction in the lowest order case (sequences of spaces containing constants). Section \ref{sec:high} contains the construction in the high order case (sequences of spaces containing polynomials of degree $r$, for any choice of $r \geq 1$). Section \ref{sec:algo} shows how the preceding results can be used to set up a numerical method for a prototype equation and start to analyse it.

\section{Notations\label{sec:not}}
 Let $\calT$ be a polytopal mesh of some Euclidean polyhedral domain. When considering a polytope $T$ with a face $T'$, if $u$ is a differential form on $T$, by its trace on $T'$ we always mean the trace in the sense of pullback of $u$ to $T'$ by the inclusion of $T'$ in $T$.

For each $k \in \bbN$ and $T\in \calT$ we let $X^k(T)$ be the space of differential $k$-forms on $T$ which are in $\rmL^2(T)$ with exterior derivative in $\rmL^2(T)$. The $\rmL^2$ norm is denoted $|\cdot|$ and the natural graph norm on $X^k(T)$ is denoted $\| \cdot \|$.

The subspace of $X^k(T)$ consisting of forms with $0$ traces on $\partial T$ is denoted $X^k_0(T)$:
\begin{equation}
  X^k_0(T) = \{ u \in X^k(T) \ : \ u|_{\partial T} = 0\}.
\end{equation}
The subscript $0$ will have the same meaning for other spaces as well. Elements on $X^k(T)$ have well defined traces, in negative index Sobolev spaces, only on codimension one faces $T' \subseteq \partial T$. However, if these traces in turn are imposed to be in $X^k(T')$, one can repeat the procedure and follow traces all the way down. One gets a space of forms with improved boundary regularity and some nice properties, such as \cite[Proposition 2.4]{ChrRap16}. All our discrete spaces will have this type of extra boundary regularity.


The space of Whitney $k$-forms (which could also be attributed to Weil and de Rham) was originally presented in terms of partitions of unity, such as barycentric coordinates, but it can also be presented in terms of the Koszul operator \cite[\S 3.2]{ArnFalWin06}. For each cell $T \in \calT$, we let $b_T$ be its barycentre (we suppose it is ''computable''). On $T$, the Koszul operator $\kappa_T$ is the contraction by the vector field on $T$ given as $x \mapsto x- b_T$ (for $x \in T$). We adopt notations from \cite{ArnFalWin06}. The original lowest order Whitney forms are then $\Pminus^1\Lambda^k(T)$. The high order version are denoted $\Pminus^r\Lambda^k(T)$:
\begin{equation}
\Pminus^r\Lambda^k(T) = \{ u \in \Poly^r \Lambda^k(T) \ : \ \kappa_T u \in \Poly^r \Lambda^{k-1} (T)\}.
\end{equation}
These spaces were defined in \cite{Hip99} in terms of the standard Poincaré null-homotopy operators rather than the Koszul operators and were described in FES terms in \cite{ChrRap16} under the name ''trimmed'' polynomial differential forms.

The Hodge star operator is denoted $\hs$. It maps from $k$-forms on $T$ to $(d-k)$-forms on $T$, when $d = \dim T$. It is characterized by the property that, when $u$ and $v$ are both $k$-forms:
\begin{equation}
  u \wedge \hs v = (u \cdot v)\, \omega,
\end{equation}
where the pointwise scalar product is denoted $(\cdot)$ and the volume form on $T$ associated with the scalar product and the orientation is denoted $\omega$. For instance $\hs 1 = \omega$.
The formal adjoint of $\rmd$ with respect to the $\rmL^2$ scalar product is denoted $\rmd^\star$. We have $\rmd^\star = \pm \hs \rmd \hs$.

A third point of view on Whitney forms, developed in \cite{Chr08M3AS}, is that they are the solutions to the PDE system:
\begin{equation}
  \rmd^\star \rmd u = 0, \quad \rmd^\star u = 0,
\end{equation}
supposed to hold on simplices of all dimensions (equipped with translation invariant metrics). For $0$-forms this reduces to $\Delta u = 0$. For $k$-forms on simplices (or flat cells for that matter) of dimension $k$, this reduces to $\rmd^\star u= 0$, which is a characterization of the constant $k$-forms.

Even though we adopt the FES point of view, we first present our results without relying on those definitions. On the other hand \cite[\S 2]{ChrRap16} contains a glossary and a short presentation that is more than sufficient for this article.

\paragraph{FES: an overview}  Since the FES framework has not been reviewed recently we now offer a guide to the relevant literature, for the convenience it might provide the reader.
\begin{itemize}
\item In \cite{Chr08M3AS} the FES framework was introduced as auxiliary discrete spaces inside which analogues of (lowest order) Whitney forms could be defined as locally harmonic forms.
\item Reference \cite{Chr09AWM} is mostly expository.  In \S 3.1 some background in algebra is provided (the five lemma, the snake lemma and the long exact cohomology sequence, the Künneth theorem on tensor products). In \S 3.2 basic concepts in differential geometry are reviewed. This algebraic and differential geometric background is then applied to study FES. The motivation was the construction of Galerkin methods for Maxwell's equations, complementing \cite{Hip02,Jol03,Mon03} on some points, particularly in regards to general meshes and basis functions.

  A ''geometric decomposition'' in terms of ''bubble spaces'', in the sense of \cite{ArnFalWin09}, is given in \S 3.3.2 (Proposition 3.14) as a byproduct of discretely harmonic forms.

  A first study of tensor products of FES is provided in \S 3.3.3 and further developed in later publications. This theory covers standard elements as in \cite[\S 3]{ArnBofBon15} and splines as in \cite{BufRivSanVaz11}. In fact, one can start with any space of functions on an interval that contains a non-trivial constant function and one with different values at the endpoints. The latter function can be for instance an exponential. 

\item In \cite[\S 5]{ChrMunOwr11} the framework was further developed in its own right, to account for the mixed finite element spaces synthesized in FEEC. In \S 5.3 commuting quasi-interpolators deduced from smoothings were studied. In \S 5.4 stability properties were deduced.

\item In \cite{Chr13FoCM} it is illustrated that by modifying the definition of harmonicity one can get upwinded complexes of differential forms, useful for convection diffusion problems. The above mentioned exponential function allows one to recover some tensor product ``exponentially fitted'' methods. The scalar convection diffusion problem is analyzed in \cite{ChrHalSor14} in fractional Sobolev spaces (of the type $\rmH^{1/2}$).
  
\item In \cite{ChrRap16} FES was used to describe the spaces $\Pminus^r\Lambda^k(T)$ of trimmed polynomial differential forms on simplices. In \S 2  a glossary and a seven page summary of FES is provided. Alternative degrees of freedom were proposed. Resolutions were introduced to describe relations in the natural spanning families constructed with barycentric coordinates.

  It is believed that working with canonical generators and relations (instead of selecting a specific basis) and systematically treating restrictions to subcells, could be valuable for implementations, not just the theoretical study. 

\item In \cite{ChrGil16} it was shown that the Serendipity element \cite{ArnAwa14}, the TNT element \cite{CocQiu14} and the trimmed polynomial differential forms are all examples of FES satisfying a minimality condition. A connection with VEM was made explicit in \S 2.1. A general recipy for constructing minimal or ''serendipity'' spaces was given in \S 3.

\item In \cite{ChrHu18} and \cite{ChrHu23} the framework of FES is generalized to account for more types of finite element spaces. In \cite{ChrHu23,Chr22} the connection with category theory and sheaves is described.
\end{itemize}

\section{Lowest order case\label{sec:low}}
Let $T$ be a cell. The case $\dim T =0$ need not be detailed, so we suppose that $\dim T = d > 0$.

In this section we define $W^k(T)$ as a slight modification of $\Pminus^1 \Lambda^k(T)$, namely, that for, $k= 0$ we impose that the elements of $W^0(T)$ should have integral $0$.

Then we define:
\begin{align}
  Z^k_0(T) & = \{u \in X^k_0(T) \ : \ \forall w \in W^{d-k}(T) \quad \int_T u \wedge w = 0\}. 
\end{align}

\begin{proposition}\label{prop:zexact}
  The spaces $Z^\bs_0(T)$ form a sequence which is exact except at index $k=\dim T$, where the integral determines an isomorphism from the cohomology group to $\bbR$.
\end{proposition}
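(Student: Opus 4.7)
The plan is to compare $Z^\bullet_0(T)$ with $X^\bullet_0(T)$ via a short exact sequence of complexes, and then use the known cohomology of $X^\bullet_0(T)$.

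First I would verify that $Z^\bullet_0(T)$ is a subcomplex of $X^\bullet_0(T)$. Given $u \in Z^k_0(T)$ and $w \in W^{d-k-1}(T)$, Stokes' theorem together with $u|_{\partial T}=0$ gives
\begin{equation*}
  \int_T \rmd u \wedge w = (-1)^{k+1} \int_T u \wedge \rmd w.
\end{equation*}
Since $\rmd$ preserves the trimmed polynomial complex, and the modification made only at degree $0$ does not affect ${\rmd}\colon W^{d-k-1}(T) \to W^{d-k}(T)$ for $d-k-1 \geq 0$, the right-hand side vanishes, so $\rmd u \in Z^{k+1}_0(T)$. In particular at $k = d$ there is nothing to verify since $W^{-1} = 0$, and at $k = d-1$ since $W^{0}(T) = 0$.

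Next I would establish the short exact sequence of complexes
\begin{equation*}
  0 \to Z^\bullet_0(T) \hookrightarrow X^\bullet_0(T) \xrightarrow{\Phi} (W^{d-\bullet}(T))^* \to 0,
\end{equation*}
where $\Phi(u)\colon w \mapsto \int_T u \wedge w$. The kernel is $Z^k_0(T)$ by definition. For surjectivity: $W^{d-k}(T)$ is finite-dimensional and the $\rmL^2$ pairing of $d$-forms is non-degenerate, so by density of $C^\infty_c(\mathrm{int}\,T)$-forms in $\rmL^2\Lambda^k(T) \supseteq X^k_0(T)$ one can build, for any prescribed functional on $W^{d-k}(T)$, a compactly supported smooth representative (all such smooth compactly supported forms lie in $X^k_0(T)$). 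The induced differential on $(W^{d-\bullet}(T))^*$ is, up to sign, the transpose of $\rmd$ on $W^\bullet(T)$.

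I would then compute $H^\bullet(W^\bullet(T)) = 0$ using the short exact sequence of complexes
\begin{equation*}
  0 \to W^\bullet(T) \hookrightarrow \Pminus^1 \Lambda^\bullet(T) \to \bbR[0] \to 0,
\end{equation*}
with $\bbR[0]$ denoting $\bbR$ concentrated in degree $0$. The long exact sequence together with the fact that $\Pminus^1 \Lambda^\bullet(T)$ is a resolution of $\bbR$ on a contractible cell (so it has cohomology $\bbR$ only at degree $0$, realized by constants, and the quotient map to $\bbR[0]$ induces the identity on $H^0$) immediately yields the vanishing of all $H^k(W^\bullet(T))$. Dualizing (in finite dimensions) preserves exactness, so $(W^{d-\bullet}(T))^*$ also has trivial cohomology. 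The long exact sequence of the first short exact sequence then produces $H^\bullet(Z^\bullet_0(T)) \cong H^\bullet(X^\bullet_0(T))$, and the latter equals the relative de Rham cohomology $H^\bullet(T, \partial T; \bbR)$, which on a contractible $d$-cell is concentrated in top degree and is identified with $\bbR$ by integration. Since $Z^d_0(T) = X^d_0(T)$ (as $W^0(T) = 0$), the isomorphism descends to integration on $Z^d_0(T)$ modulo $\rmd Z^{d-1}_0(T)$.

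The main obstacle is the surjectivity assertion in Step 2 (a concrete density/linear-independence argument on $\rmL^2$ forms) and the invocation of the $\rmL^2$-version of relative de Rham cohomology: one should check that approximation of forms in $X^\bullet_0(T)$ by smooth compactly supported forms suffices, and that the contractibility assumption on the cell $T$ is enough to guarantee exactness of $\Pminus^1\Lambda^\bullet(T)$ at positive degrees. Everything else is then an essentially formal consequence of the long exact sequence in cohomology.
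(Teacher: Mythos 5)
Your overall strategy is exactly the paper's: the short exact sequence of complexes $0 \to Z^\bs_0(T) \to X^\bs_0(T) \to W^{d-\bs}(T)^\star \to 0$, the exactness of $W^\bs(T)$ (including at indices $0$ and $d$), and the long exact cohomology sequence. The extra details you supply (the Stokes computation showing $Z^\bs_0(T)$ is a subcomplex, the density argument for surjectivity onto the dual, and the derivation of the exactness of $W^\bs(T)$ from that of $\Pminus^1\Lambda^\bs(T)$ via the quotient $\bbR[0]$) are all things the paper leaves implicit, and they are sound.

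One factual error should be corrected: $W^0(T)$ is \emph{not} zero. By the paper's definition it is $\Pminus^1\Lambda^0(T)=\Poly^1\Lambda^0(T)$ with the constants removed by the zero-mean condition, i.e.\ the $d$-dimensional space of affine functions with vanishing integral (and it must be nonzero --- killing the constants is precisely what makes $W^\bs(T)$ exact at index $0$). This affects two of your side remarks. First, at $k=d-1$ there \emph{is} something to verify, but your general Stokes argument already covers it, since $\rmd$ maps $W^0(T)$ into the constant $1$-forms, which lie in $W^1(T)=\Pminus^1\Lambda^1(T)$. Second, $Z^d_0(T)\neq X^d_0(T)$; it is a subspace of codimension $d$. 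Fortunately your conclusion at top degree does not need that identification: the long exact sequence shows the inclusion $Z^d_0(T)\hookrightarrow X^d_0(T)$ induces an isomorphism on $H^d$, and composing with the integration isomorphism $H^d(X^\bs_0(T))\cong\bbR$ gives exactly the statement claimed. With those two sentences repaired, the proof stands and coincides with the paper's.
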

\begin{proof}
  We have a short exact sequence of complexes:
\begin{equation}
  0 \to Z^\bs_0(T) \to X^\bs_0(T) \to W^{d-\bs}(T)^\star \to 0.
\end{equation}
The sequence $W^\bs(T)$ is exact, including at indices $0$ and $d$. Hence $Z^\bs_0(T)$ inherits the cohomology of $X^\bs_0(T)$, by the snake lemma, in its full version. See \cite[Theorem 3.5, p. 600]{Alu09} or \cite[Theorem 3.1, p. 356]{Chr09AWM}.
\end{proof}

We define:
\begin{equation}
  Y^k(T) = \{ u \in Z^k_0(T) \ : \ u \perp \rmd Z^{k-1}_0(T) \}.
\end{equation}
Orthogonality is taken with respect to the $\rmL^2$ scalar product (also elsewhere in this paper).
We remark that Proposition \ref{prop:zexact} entails that the following is an isomorphism:
\begin{equation}
\rmd: Y^{k-1}(T) \to \ker \rmd |_{Z^{k}_0(T)},
\end{equation}
for $k < d$, and that, for the case $k =d$, we also have the isomorphism:
\begin{equation}
\rmd: Y^{d-1}(T) \to \ker \ts \int_T(\cdot) |_{Z^{d}(T)}.
\end{equation}

We define $B^k(T)$ to be the subspace of $X^k(T)$ consisting of fields $u \in X^k(T)$ such that the following system of PDEs holds:
\begin{align}
  \int_T \rmd u \cdot \rmd u' + \int_T \rmd u' \wedge \kappa_T a + \int_T u' \wedge \kappa_T b + \int_T u' \cdot \rmd v & = 0,\\
  \int_T \rmd u \wedge \kappa_T a' =0 ,\ \int_T u \wedge \kappa_T b' & = 0 ,\\
  \int_T u \cdot \rmd v' & = 0,
\end{align}
for all $u' \in X^k_0(T)$, for some $a \in \rmP^0\Lambda^{d-k}(T)$, some $b \in \rmP^0\Lambda^{d-k +1}(T)$, and some $v \in Y^{k-1}(T)$,  for all $v' \in Y^{k-1}(T)$, all $a' \in \rmP^0\Lambda^{d-k}(T)$ and all $b' \in \rmP^0\Lambda^{d-k +1}(T)$. 

Notice that $a', b'$ and $v'$ enforce constraints on $u$, and that $a, b, v$ can be interpreted as the corresponding Lagrange multipliers. We refer to $a'$ and $b'$ as the finite dimensional constraints.

\begin{remark}
Removing the finite dimensional space of constraints represented by $a,a'$ and $b,b'$, and allowing general $v,v'$, we obtain the harmonic extension used in \cite{Chr08M3AS}. Indeed the equations considered there were:
\begin{align}
  \rmd^\star \rmd u + \rmd v & = 0,\\
  \rmd^\star u & = 0.
\end{align}
The Lagrange multiplier $v$ was always $0$. This can be interpreted as solving a Hodge-Laplace problem. Indeed, with $-\Delta = \rmd^\star \rmd + \rmd \rmd^\star$, the above system is equivalent to:
\begin{equation}
  \Delta u = 0,\ \textrm{and }\ (\rmd^\star u) |_{\partial T} = 0.
\end{equation}
In both cases the boundary condition is that $u|_{\partial T}$ is given (in the sense of pullbacks).

In \cite{Chr08M3AS}, discrete variants obtained from weak formulations on suitable finite dimensional spaces, were also used to make the method ''fully discrete''.

To sum up the definition of $B^k (T)$ involves a variant of the Hodge-Laplace equation deduced for certain boundary conditions, modified by some polynomial constraints.
\end{remark}

We now detail the special cases $k=0$ and $k=d$ of the definition of $B^k(T)$.

\noindent For $k= 0$ we are left with the following system:
 \begin{align}
  \int_T \rmd u \cdot \rmd u' + \int_T \rmd u' \wedge \kappa_T a = 0,\\
  \int_T \rmd u \wedge \kappa_T a' = 0.
\end{align} 

\noindent For $k=d$, we are left with:
\begin{align}
  \int_T u' \wedge \kappa_T b + \int_T u' \cdot \rmd v = 0,\\
  \int_T u \wedge \kappa_T b' = 0 ,\\
  \int_T u \cdot \rmd v' = 0,
\end{align}
Notice that $\kappa_T b$ is a general element of $W^0(T)$. It has integral $0$ because of the choice of origin in $\kappa_T$. Let $\overline u$ be the average of $u$. Then $u-\overline u$ is in $Y^d(T)$ and has integral $0$, so it is $0$. That is $u = \overline u$. In other words $B^d(T)$ consists of the constant $d$-forms.

Then we define $A^k(T)$ to be the subspace of $X^k(T)$ consisting of fields $u$ whose trace on any face $T'$ of $T$ (including $T' = T$) is in $B^k(T')$.

We make the following claims, which we will justify subsequently:
\begin{itemize}
\item A field in $B^k(T)$, for $k < \dim T$ is uniquely determined by its boundary traces. Moreover the extension problem defined by the above PDE is wellposed.
\item The family of spaces $A^k(T)$ is closed under traces and exterior derivative, making them a FES.
\item The spaces $A^k(T)$ are finite dimensional and have one degree of freedom per $k$-face of $T$. This ensures \emph{softness} of the FES.
\item For each $T \in \calT$, the spaces $A^\bs(T)$ form an exact sequence resolving $\bbR$. Together with softness this ensures \emph{compatibility} of the FES.
\item The space $A^k(T)$ contains the Whitney forms $\Pminus^1 \Lambda^k(T)$. This ensures approximation properties.
\item For any $u \in A^k(T)$ and any $a \in \rmP^0\Lambda^{d-k}(T)$, the integral $\int_T u \wedge a$ can be computed exactly from the degrees of freedom, in a recursive way.
\end{itemize}

According to plan, we first comment on the well-posedness of the PDE. We will use the following.

\begin{proposition} On a cell $T$ of dimension $d$ and for $k < d$, the above system of PDEs for $B^k(T)$ is wellposed, for given boundary data.
\end{proposition}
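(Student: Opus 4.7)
The plan is to recast the system as a mixed variational problem with Lagrange multipliers $(a,b,v)$ in the product space
\[
P \;:=\; \rmP^0\Lambda^{d-k}(T) \oplus \rmP^0\Lambda^{d-k+1}(T) \oplus Y^{k-1}(T).
\]
After subtracting any continuous lift of the prescribed boundary trace, the unknown $u_0$ sits in $X^k_0(T)$, and the verification reduces to checking the two Babu\v{s}ka--Brezzi conditions for the principal form $\alpha(u,u'):=\int_T \rmd u\cdot \rmd u'$ and the constraint form
\[
\beta\bigl(u,(a,b,v)\bigr) \;:=\; \int_T \rmd u \wedge \kappa_T a + \int_T u \wedge \kappa_T b + \int_T u \cdot \rmd v.
\]

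The first main step is to identify the kernel $K$ of the constraints. For $u \in X^k_0(T)$ and a constant $(d-k)$-form $a'$, Stokes' theorem together with the Cartan--Koszul identity $\rmd(\kappa_T a') = (d-k)\,a'$ gives
\[
\int_T \rmd u \wedge \kappa_T a' \;=\; (-1)^{k+1}(d-k) \int_T u \wedge a'.
\]
Since $k<d$, the factor $(d-k)$ is nonzero, so the $a'$ and $b'$ constraints taken together are equivalent to $u$ being wedge-orthogonal to $\rmP^0\Lambda^{d-k}(T) + \kappa_T \rmP^0\Lambda^{d-k+1}(T) = \Pminus^1\Lambda^{d-k}(T) = W^{d-k}(T)$, i.e., to $u \in Z^k_0(T)$. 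The $v'$ constraint, combined with the isomorphism $\rmd\colon Y^{k-1}(T) \to \ker\rmd|_{Z^k_0(T)}$ supplied by Proposition~\ref{prop:zexact}, then forces $u$ to be $L^2$-orthogonal to $\ker\rmd|_{Z^k_0(T)}$. Hence $K$ is the $L^2$-orthogonal complement, inside $Z^k_0(T)$, of the space of closed forms.

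Coercivity of $\alpha$ on $K$ then follows from the closed-range theorem applied to the exact Hilbert complex $(Z^\bs_0(T),\rmd)$ of Proposition~\ref{prop:zexact}: $\rmd$ restricts to a bijection from $K$ onto $\rmd Z^k_0(T)$ with bounded inverse, yielding a Poincar\'e estimate $\|u\|_{L^2} \leq C\|\rmd u\|_{L^2}$ on $K$. For the inf--sup bound I would rewrite, via the same Cartan--Koszul manipulation, the functional $u\mapsto\beta(u,(a,b,v))$ as $u\mapsto\int_T u\cdot \Phi(a,b,v)$ with
\[
\Phi(a,b,v) \;=\; \hs^{-1}\bigl((-1)^{k+1}(d-k)\,a + \kappa_T b\bigr) + \rmd v.
\]
The two summands are $L^2$-orthogonal, because the first belongs to $\hs^{-1} W^{d-k}(T)$ whereas $\rmd v \in Z^k_0(T)$ is by definition wedge-orthogonal to $W^{d-k}(T)$. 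Consequently $\Phi(a,b,v)=0$ forces each summand to vanish: the polynomial piece yields $a=0$ and $b=0$ by the direct-sum decomposition of $\Pminus^1\Lambda^{d-k}(T)$, while $\rmd v=0$ with $v\in Y^{k-1}(T)$ gives $v=0$ via the aforementioned isomorphism. The quantitative inf--sup constant is then controlled by finite-dimensional equivalence of norms on the polynomial piece and by the Poincar\'e estimate $\|\rmd v\|_{L^2}\geq c\|v\|_{L^2}$ on $Y^{k-1}(T)$.

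The main technical point is this transpose identification and the ensuing orthogonal decomposition of $\Phi(a,b,v)$; the hypothesis $k<d$ enters precisely to ensure that the scaling factor $(d-k)$ does not vanish, without which the $a'$ constraint would carry no information. Once these pieces are in place, Babu\v{s}ka--Brezzi delivers existence, uniqueness and continuous dependence on the boundary data.
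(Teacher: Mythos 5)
Your overall strategy coincides with the paper's: lift the boundary data, reduce to a saddle-point problem for $\hat u\in X^k_0(T)$ with multipliers in $\rmP^0\Lambda^{d-k}(T)\times\rmP^0\Lambda^{d-k+1}(T)\times Y^{k-1}(T)$, integrate the finite-dimensional constraints by parts so as to recognize $W^{d-k}(T)=\Pminus^1\Lambda^{d-k}(T)$, identify the constraint kernel as $Y^k(T)$, and invoke Babu\v{s}ka--Brezzi. Your kernel identification and your representer $\Phi(a,b,v)$, with its $\rmL^2$-orthogonal splitting into the polynomial piece and $\rmd v$, match the paper's computation and its small lemma $(\hs W^{d-k}(T))\cap\rmd Y^{k-1}(T)=0$. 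For the coercivity on the kernel you take a genuinely different route: the paper proves $|u|\leq C|\rmd u|$ on $Y^k(T)$ by a contradiction/compactness argument resting on the compact embedding of $\{u\in X^k_0(T):\rmd^\star u\in\rmL^2\}$ into $\rmL^2$ (Lemma~\ref{lem:coerker}), whereas you deduce it from the open mapping theorem applied to $\rmd\colon Y^k(T)\to\rmd Z^k_0(T)$, using that the exactness of $Z^\bs_0(T)$ from Proposition~\ref{prop:zexact} makes the range closed (it equals $\ker\rmd|_{Z^{k+1}_0(T)}$, resp.\ $\ker\int_T$ when $k=d-1$). That is legitimate and arguably more economical, since it reuses Proposition~\ref{prop:zexact} rather than a separate compactness step.

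The gap is in the inf-sup step. You correctly obtain injectivity of $\Phi$ and, via the orthogonal splitting, a bound of the form $|\Phi(a,b,v)|_{\rmL^2}\geq c\,(|a|+|b|+|\rmd v|)$. But that is not yet the inf-sup condition: the supremum is taken over $u\in X^k_0(T)$ normalized in the \emph{graph} norm $\|u\|$, so $\sup_u|\int_T u\cdot\Phi|/\|u\|$ is the dual norm of $\Phi$ on $X^k_0(T)$, which is in general strictly weaker than $|\Phi|_{\rmL^2}$ (for instance it is bounded by $|\psi|$ when $\Phi=\rmd^\star\psi$). One must show that this dual norm controls $|\Phi|_{\rmL^2}$ \emph{uniformly} on the sum $\hs W^{d-k}(T)\oplus\rmd Y^{k-1}(T)$. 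On $\rmd Y^{k-1}(T)$ alone it does, because $\rmd v$ is itself an admissible test function with $\|\rmd v\|=|\rmd v|$; on the finite-dimensional piece alone it does by equivalence of norms; but uniformity over the sum is not automatic, and it is exactly what the paper's Lemma~\ref{lem:infsup} establishes by compactness (equivalently, one may quote that a closed subspace plus a finite-dimensional subspace meeting it trivially is closed with bounded projections onto the summands). Your sentence attributing the quantitative constant to ``finite-dimensional equivalence of norms and the Poincar\'e estimate'' does not supply this step. The rest of the argument is sound, and your Cartan--Koszul constant $(d-k)$ is the standard one.
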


\begin{proof}
Extend the boundary data to an arbitrary field $\tilde u \in X^k(T)$ (for instance by ordinary harmonic extension). Then write the unknown $u \in X^k(T)$ as $u = \tilde u + \hat u$ with $\hat u \in X^k_0(T)$. We now consider the system:
\begin{align}
  \int_T \rmd \hat u \cdot \rmd u' + \int_T \rmd u' \wedge \kappa_T a + \int_T u' \wedge \kappa_T b + \int_T u' \cdot \rmd v & = - \int_T \rmd \tilde u \cdot \rmd u',\\
  \int_T \rmd \hat u \wedge \kappa_T a' = - \int_T \rmd \tilde u \wedge \kappa_T a'  ,\ \int_T \hat u \wedge \kappa_T b' & = - \int_T \tilde u \wedge \kappa_T b' ,\\
  \int_T \hat u \cdot \rmd v' & =  - \int_T \tilde u \cdot \rmd v' ,
\end{align}

Since $k < d$ we can now integrate by parts the constraints, as in:
\begin{align}
\int_T \rmd \hat u \wedge \kappa_T a' + \int_T \hat u \wedge \kappa_T b' & = \int_T \hat u \wedge ( (-1)^{k+1} \rmd \kappa_T a' + \kappa_T b'),\\
&=  \int_T \hat u \wedge (\frac{(-1)^{k+1}}{d-k} a' + \kappa_T b') ,
\end{align}
For $k = 0$ there is no $b'$ term. 
We recognize the general expression of a $w' \in \Whitney^{d-k}(T)$:
\begin{equation}\label{eq:wprime}
  w' = \frac{(-1)^{k+1}}{d-k} a' + \kappa_T b',
\end{equation}
uniquely corresponding to $a'$ and $b'$.

We can rewrite the system for  $u \in X^k_0(T)$ as, for all $u' \in X^k_0(T)$:
\begin{align}
  \int_T \rmd \hat u \cdot \rmd u' + \int_T u' \wedge w + \int_T u' \cdot \rmd v & = - \int_T \rmd \tilde u \cdot \rmd u',\\
  \int_T  \hat u \wedge w' & = - \int_T \rmd \tilde u \wedge \kappa_T a' - \int_T \tilde u \wedge \kappa_T b', \\
  \int_T \hat u \cdot \rmd v' & =  - \int_T \tilde u \cdot \rmd v' ,
\end{align}
for some $w \in W^{d-k}(T)$ and $v \in Y^{k-1}(T)$, and for all $w' \in W^{d-k}(T)$ (written as (\ref{eq:wprime}) ) and $v' \in Y^{k-1}(T)$.

We look first at the homogeneous problem (zero right sides). The second equation says that $\hat u \in Z^k_0(T)$ and the third one then says that $\hat u\in Y^k(T)$. The first equation with $u' = \hat u$ gives $\rmd \hat u =0$. This shows that $\hat u = 0$. The homogeneous problem thus has only the trivial solution.

The proof of the well-posedness in the PDE sense, is contained in the Lemmas \ref{lem:infsup} and \ref{lem:coerker}, below.
\end{proof}

 We begin with the following remark.
\begin{lemma} We have: $(\hs W^{d-k}(T)) \cap \rmd Y^{k-1}(T) = 0$.
\end{lemma}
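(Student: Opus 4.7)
My plan is to show directly from the definitions that the only $w \in W^{d-k}(T)$ with $\hs w \in \rmd Y^{k-1}(T)$ is $w = 0$. The entire argument rests on a single structural fact already recorded in the proof of Proposition \ref{prop:zexact}: the short exact sequence $0 \to Z^\bs_0(T) \to X^\bs_0(T) \to W^{d-\bs}(T)^\star \to 0$ is a sequence \emph{of complexes}, so $Z^\bs_0(T)$ is a subcomplex of $X^\bs_0(T)$ and in particular $\rmd$ maps $Z^{k-1}_0(T)$ into $Z^k_0(T)$.

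First I would suppose $\hs w = \rmd v$ for some $v \in Y^{k-1}(T)$. Because $Y^{k-1}(T) \subseteq Z^{k-1}_0(T)$ by definition of $Y^{k-1}$, the subcomplex property just recalled places $\rmd v \in Z^k_0(T)$, and therefore $\hs w \in Z^k_0(T)$. Next I would invoke the defining orthogonality condition of $Z^k_0(T)$ against the admissible test $w' = w \in W^{d-k}(T)$ itself, which yields $\int_T \hs w \wedge w = 0$. Combining graded commutativity of the wedge product with the defining Hodge identity $w \wedge \hs w = |w|^2\, \omega$ rewrites this integral as $\pm \int_T |w|^2\, \omega$, so $w = 0$ and hence $\hs w = 0$. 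This is exactly the asserted triviality of the intersection.

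The main obstacle is essentially bookkeeping: tracking the Hodge sign $(-1)^{k(d-k)}$, and ensuring the inclusions $Y^{k-1}(T) \subseteq Z^{k-1}_0(T) \subseteq X^{k-1}_0(T)$ and the stability of $Z^\bs_0(T)$ under $\rmd$ are correctly invoked. No delicate analysis is needed. Conceptually the lemma is just the observation that the $L^2$ pairing of $W^{d-k}(T)$ with $\hs W^{d-k}(T)$ is nondegenerate, while $\rmd Y^{k-1}(T)$ lies in $Z^k_0(T)$, which by construction is $L^2$-orthogonal to $W^{d-k}(T)$.
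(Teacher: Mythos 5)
Your proof is correct and follows essentially the same route as the paper's: the paper's one-line argument is precisely that elements of $Z^k_0(T)$ are $\rmL^2$-orthogonal to $\hs W^{d-k}(T)$ by definition, combined with the (implicit) inclusion $\rmd Y^{k-1}(T) \subseteq Z^k_0(T)$, which you simply spell out. The extra detail you supply (the subcomplex property of $Z^\bs_0(T)$ and the sign bookkeeping) is a faithful expansion, not a different argument.
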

\begin{proof}


It holds, more generally, that for $u \in Z^{k}_0(T)$ we have $u \perp \hs W^{d-k}(T)$ by definition.
\end{proof}

\begin{lemma}[Inf-Sup condition]\label{lem:infsup} Fix $k$ and $T$. Suppose that $E$ is a finite dimensional subspace of $X^k(T)$  and that $E \cap \rmd Y^{k-1}(T) = 0$. Then:
\begin{equation}
  \inf_{v \in \rmd Y^{k-1}(T), w \in E} \sup_{u\in X^k_0(T)} \frac{|\langle u, v \rangle + \langle u, w \rangle|}{\| u\| (\| v \| + \| w\|)} >0.
\end{equation}
\end{lemma}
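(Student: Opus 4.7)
The plan is to argue by contradiction, combining weak compactness in $\rmL^2(T)$ with the diagonal test choice $u = v$. Two preliminary observations drive the proof. First, $\rmd Y^{k-1}(T) \subset X^k_0(T)$ and the graph norm $\|\cdot\|$ coincides with the $\rmL^2$-norm $|\cdot|$ on this subspace: any $v = \rmd y$ with $y \in Y^{k-1}(T) \subset X^{k-1}_0(T)$ satisfies $\rmd v = 0$, while the pullback of $v$ to $\partial T$ equals the exterior derivative of the (vanishing) pullback of $y$. Second, $\rmd Y^{k-1}(T)$ is closed in $\rmL^2(T)$: by Proposition \ref{prop:zexact} and the remark following it, $\rmd \colon Y^{k-1}(T) \to \rmd Y^{k-1}(T)$ is a topological isomorphism, so the image is closed in $X^k(T)$, and since the $X^k$- and $\rmL^2$-norms coincide on it, it is closed in $\rmL^2(T)$ as well.

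Suppose, for contradiction, that the inf-sup fails. After normalisation there are sequences $v_n \in \rmd Y^{k-1}(T)$ and $w_n \in E$ with $\|v_n\| + \|w_n\| = 1$ and
\begin{equation*}
\delta_n := \sup_{u \in X^k_0(T)} \frac{|\langle u, v_n + w_n\rangle|}{\|u\|} \to 0.
\end{equation*}
Finite-dimensionality of $E$, together with the first observation, ensures that $(v_n)$ is bounded in $\rmL^2(T)$ and $(w_n)$ is bounded in $E$. Passing to subsequences, $v_n \rightharpoonup v_\star$ weakly in $\rmL^2$ with $v_\star \in \rmd Y^{k-1}(T)$ by weak closedness, and $w_n \to w_\star$ strongly in $E$. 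For any fixed $u \in X^k_0(T)$ the hypothesis forces $\langle u, v_n + w_n \rangle \to 0$, hence $\langle u, v_\star + w_\star \rangle = 0$; density of $X^k_0(T)$ in $\rmL^2(T)$ then yields $v_\star + w_\star = 0$, and the hypothesis $E \cap \rmd Y^{k-1}(T) = 0$ forces $v_\star = w_\star = 0$. In particular $|w_n| \to 0$.

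To extract a quantitative contradiction, I would test with $u = v_n$, admissible by the first observation and satisfying $\|u\| = |v_n|$. Using $\bigl|\langle v_n, v_n + w_n\rangle\bigr| \leq \delta_n |v_n|$ together with Cauchy--Schwarz to bound $|\langle v_n, w_n\rangle| \leq |v_n||w_n|$, one arrives at $|v_n| \leq |w_n| + \delta_n \to 0$. This forces $\|v_n\| + \|w_n\| \to 0$, contradicting the normalisation.

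The main obstacle is the identification of the weak limit $v_\star$ as an element of $\rmd Y^{k-1}(T)$; everything hinges on the $\rmL^2$-closedness established in the first step, itself a consequence of Proposition \ref{prop:zexact} together with the automatic vanishing of $\rmd$ and of boundary traces on $\rmd Y^{k-1}(T)$. With closedness in hand the rest is a textbook contradiction-and-compactness argument closed off by the diagonal energy test $u = v_n$.
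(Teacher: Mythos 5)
Your proposal is correct and follows essentially the same route as the paper: a normalisation-and-contradiction argument, weak compactness for $v_n$ and strong (finite-dimensional) compactness for $w_n$, identification of the weak limit via closedness of $\rmd Y^{k-1}(T)$, and the diagonal test $u = v_n$ (admissible since $\rmd v_n = 0$ and $v_n$ has vanishing trace, so $\|v_n\| = |v_n|$) to reach the contradiction. Your write-up merely makes explicit a few details the paper leaves implicit, such as why $\rmd Y^{k-1}(T)$ is $\rmL^2$-closed and the density step identifying $v_\star + w_\star = 0$.
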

\begin{proof}
  If not, construct sequences $v_n \in \rmd Y^{k-1}_0(T)$ and $w_n \in E$ such that $\|v_n\| +\|w_n\| = 1$ and:
  \begin{equation}
    \sup_{u\in X^k_0(T)} \frac{|\langle u, v_n \rangle + \langle u, w_n \rangle|}{\| u\|} \to 0.
  \end{equation}
  Extracting subsequences we suppose futhermore that $v_n \to v$ weakly in $X^k_0(T)$ and that $w_n \to w$ strongly in $E$ (by finite dimensionality of $E$).

  From the above we get that $v + w = 0$. Also $\rmd Y^{k-1}(T)$ is closed in $X^k_0(T)$, so $v\in \rmd Y^{k-1}(T)$. Therefore $v= 0$ and $w= 0$. Thus we conclude that $w_n \to 0$ strongly. From that we get:
  \begin{equation}
    \sup_{u\in X^k_0(T)} \frac{|\langle u, v_n \rangle|}{\| u\|} \to 0.
  \end{equation}
  But this is false, since we can use $u = v_n$ in this $\sup$.
  \end{proof}

\begin{lemma}[Coercivity on the kernel]\label{lem:coerker}
  There is $C>0$ such that for all $u \in Z^k_0(T)$, such that $u \perp \rmd Y^{k-1}(T)$ (w.r.t. $\rmL^2$ scalar product):
\begin{equation}
   |u| \leq C | \rmd u|.
\end{equation}
\end{lemma}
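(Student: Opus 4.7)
The plan is to apply Banach's open mapping theorem to an isomorphism supplied by Proposition \ref{prop:zexact}. First, I observe that the hypotheses on $u$ --- namely $u \in Z^k_0(T)$ together with $u \perp \rmd Y^{k-1}(T)$ --- are precisely the defining conditions of $Y^k(T)$, so the lemma reduces to showing $|u| \le C|\rmd u|$ for every $u \in Y^k(T)$. Note that the bound is only meaningful for $k \le d-1$; for $k = d$ the space $Y^d(T)$ is a line of constants on which $\rmd u = 0$, so no such estimate can hold.

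Next, I consider the linear map $\rmd \colon Y^k(T) \to K$, where $K = \ker \rmd|_{Z^{k+1}_0(T)}$ if $k+1 < d$ and $K = \ker \int_T \cdot |_{Z^d(T)}$ if $k = d-1$. A brief verification confirms that the image lands in $K$: the pullback of $\rmd u$ to $\partial T$ equals the exterior derivative of the pullback of $u$ and therefore vanishes, so $\rmd u \in X^{k+1}_0(T)$; and for any $w \in W^{d-k-1}(T)$, integration by parts gives
\begin{equation}
\int_T \rmd u \wedge w = \pm \int_T u \wedge \rmd w,
\end{equation}
with no boundary contribution since $u|_{\partial T} = 0$, and this vanishes because $\rmd w \in W^{d-k}(T)$ (exactness of the Whitney complex) while $u \in Z^k_0(T)$. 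By the isomorphism statements following Proposition \ref{prop:zexact}, the map $\rmd \colon Y^k(T) \to K$ is a continuous bijection. Both spaces are Banach under the graph norm $\|\cdot\|_X = (|\cdot|^2 + |\rmd\cdot|^2)^{1/2}$, being closed subspaces of the Hilbert spaces $X^k_0(T)$ and $X^{k+1}_0(T)$ respectively.

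The open mapping theorem then delivers a bounded inverse, so $\|u\|_X \le C \|\rmd u\|_X$ for every $u \in Y^k(T)$. Since $\rmd \rmd u = 0$, the right side equals $C |\rmd u|$, and since $|u| \le \|u\|_X$ the estimate $|u| \le C |\rmd u|$ follows. The only nontrivial ingredient is the surjectivity in the isomorphism, which is exactly the content of Proposition \ref{prop:zexact}; the rest is abstract functional analysis. An alternative route, more in the spirit of the paper's preferred ``scaling plus compactness'' style of \cite{ArnFalWin06}, would argue by contradiction using a unit-norm sequence $u_n \in Y^k(T)$ with $|\rmd u_n| \to 0$, establishing uniform boundedness of $\rmd^\star u_n$ in $\hs W^{d-k+1}(T)$ by a Lagrange multiplier argument, and then invoking the Picard--Weber compactness theorem to extract an $\rmL^2$-convergent subsequence whose limit must be zero by exactness. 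In either approach the main step uses nothing beyond Proposition \ref{prop:zexact}.
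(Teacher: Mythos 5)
Your proof is correct, but it takes a genuinely different route from the paper's. The paper argues by contradiction in the compactness style it uses throughout: take $u_n$ with $|u_n|=1$ and $|\rmd u_n|\to 0$, identify the weak $\rmL^2$ limit as $0$ using the exactness of $Z^\bs_0(T)$, then upgrade to strong convergence by controlling $\rmd^\star u_n$ through a finite-dimensional complement $F$ of $Z_0^{k-1}(T)$ in $X_0^{k-1}(T)$ and invoking the compact embedding of $\{u\in X^k_0(T):\rmd^\star u\in\rmL^2\}$ into $\rmL^2(T)$. You instead apply the bounded inverse theorem to the bijection $\rmd\colon Y^k(T)\to\ker\rmd|_{Z^{k+1}_0(T)}$ (resp.\ $\ker\int_T(\cdot)|_{Z^{d}_0(T)}$ when $k=d-1$) recorded after Proposition \ref{prop:zexact}: the target is a kernel, hence automatically closed, both sides are complete in the graph norm, and the estimate drops out. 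This is the standard ``exactness $\Rightarrow$ closed range $\Rightarrow$ Poincar\'e inequality'' mechanism of Hilbert complex theory; it buys a shorter argument that dispenses entirely with the Picard--Weber compact embedding, at the price of not producing the strong control of $\rmd^\star u_n$ that the paper's argument yields along the way. Both proofs ultimately rest on the same nontrivial input, namely Proposition \ref{prop:zexact}. Two small points: your opening identification of the hypothesis set with $Y^k(T)$ is not literally the definition (which uses $u\perp\rmd Z^{k-1}_0(T)$) and needs the one-line observation that $\rmd Z^{k-1}_0(T)\subseteq\ker\rmd|_{Z^k_0(T)}=\rmd Y^{k-1}(T)$, again from the remark after Proposition \ref{prop:zexact}; and you are right that the statement is implicitly restricted to $k<d$ (for $k=d$ the constants defeat it), which matches the only place the lemma is invoked.
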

\begin{proof}
  If not choose a sequence $u_n \in Z^k_0(T)$, such that $u_n \perp \rmd Y^{k-1}(T)$, $|u_n| = 1$ and $|\rmd u_n| \to 0$. Extracting subsequences we suppose that $(u_n)$ converges weakly in $\rmL^2$ to some $u$. Then $u \in Z^k_0(T)$ and $\rmd u =0$ and $u \perp \rmd Y^{k-1}(T)$. Hence $u=0$.

  We now show that the convergence $u_n \to 0$ is actually strong in $\rmL^2$. For that purpose we control $\rmd^\star u_n$ in $\rmL^2$. We decompose, with respect to the $\rmL^2$ product:
  \begin{equation}
    X_0^{k-1}(T) = Z_0^{k-1}(T) \oplus F.
  \end{equation}
  Any $v\in X_0^{k-1}(T)$ such that $v \perp \rmd X^{k-2}_0(T)$, can now accordingly be written $v = v' + w$ with $v' \in Z^{k-1}_0(T)$ and $w \in F$. We have:
  \begin{equation}
   \sup_{v \in X_0^{k-1}(T)} \frac {|\int_T u_n \cdot \rmd v|}{| v|} \leq \sup_{w \in F} \frac{|\int_T u_n \cdot \rmd w|}{|w|}.
  \end{equation}
  Since $F$ is finite dimensional and $u_n\to 0$ weakly in $\rmL^2$, we get that $\rmd^\star u_n \to 0$ strongly in $\rmL^2$.

  Since the subspace of $X^k_0(T)$ of consisting of fields $u$ with $\rmd^\star u \in \rmL^2(T)$, with its natural topology, is compactly injected in $\rmL^2(T)$, $(u_n)$ converges strongly in $\rmL^2$ norm to $0$. But this contradicts $|u_n| = 1$.
  \end{proof}

\begin{proposition}[FES] The family of spaces $A^k(T)$ is closed under traces and the exterior derivative. 
\end{proposition}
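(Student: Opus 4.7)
The plan is to reduce both closure properties, via the definition of $A^k(T)$, to statements about $B^\bs(T')$ on faces $T'$, and in the derivative case to a further cell-wise claim.

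\textbf{Trace closure.} Let $u \in A^k(T)$ and let $T''$ be a face of $T$. To show $u|_{T''} \in A^k(T'')$ we need $(u|_{T''})|_{T'} \in B^k(T')$ for every face $T'$ of $T''$. Since pullback is functorial, $(u|_{T''})|_{T'} = u|_{T'}$, and $T'$ is also a face of $T$, so $u|_{T'} \in B^k(T')$ by hypothesis. This part is immediate.

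\textbf{Derivative closure.} Let $u \in A^k(T)$. Since $\rmd$ commutes with pullback, $(\rmd u)|_{T'} = \rmd(u|_{T'})$ for every face $T'$ of $T$, and $u|_{T'} \in B^k(T')$. The problem reduces to the cell-wise claim: if $w \in B^k(T)$ then $\rmd w \in B^{k+1}(T)$. Fix $T$ of dimension $d$ and assume first $k < d$. Among the three defining conditions of $B^{k+1}(T)$ applied to $\rmd w$, the Euler--Lagrange equation is satisfied by choosing all Lagrange multipliers $\tilde a$, $\tilde b$, $\tilde v$ to be zero, since $\rmd \rmd w = 0$ annihilates the leading term. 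The finite-dimensional constraint $\int_T \rmd w \wedge \kappa_T b' = 0$ for $b' \in \rmP^0 \Lambda^{d-k}(T)$ coincides verbatim with the ``$\kappa_T a'$'' condition already imposed on $w$ in $B^k(T)$.

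The only nontrivial step is the constraint $\int_T \rmd w \cdot \rmd v' = 0$ for $v' \in Y^k(T)$. Here I would test the first equation of the $B^k(T)$ system for $w$ against $u'' = v' \in Y^k(T) \subset X^k_0(T)$ and show that each of the three remaining terms vanishes. The term $\int_T \rmd v' \wedge \kappa_T a$ integrates by parts (with no boundary contribution, since $v'|_{\partial T} = 0$) to a scalar multiple of $\int_T v' \wedge a$, which is zero because $v' \in Z^k_0(T)$ and $a \in \rmP^0 \Lambda^{d-k}(T) \subset W^{d-k}(T)$. The term $\int_T v' \wedge \kappa_T b$ vanishes because $\kappa_T b \in \Pminus^1 \Lambda^{d-k}(T) = W^{d-k}(T)$ (using $\kappa_T^2 = 0$) and $v' \in Z^k_0(T)$. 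Finally $\int_T v' \cdot \rmd v = 0$ because $v \in Y^{k-1}(T) \subset Z^{k-1}_0(T)$ while $v' \in Y^k(T)$ is orthogonal to $\rmd Z^{k-1}_0(T)$ by the very definition of $Y^k(T)$.

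The main obstacle I anticipate is the bookkeeping in the boundary cases $k=0$ and $k=d$, where parts of the PDE system degenerate (no $v$-multiplier for $k=0$; $\kappa_T$ kills the $a$-term when $d-k=0$; and $\rmd w = 0$ trivially when $k=d$). In each case the same mixture of integration by parts and orthogonality within $W^{d-k}(T)$ and $\rmd Z^{k-1}_0(T)$ carries the verification through, so no essentially new ingredient is required.
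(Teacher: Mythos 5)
Your proposal is correct and follows essentially the same route as the paper: traces are handled by the definition of $A^k(T)$, and derivative closure is reduced to the cell-wise claim $\rmd B^k(T) \subseteq B^{k+1}(T)$, verified by taking zero Lagrange multipliers in the Euler--Lagrange equation, inheriting one finite-dimensional constraint from that on $w$, and obtaining the $Y^k(T)$-orthogonality constraint by testing the equation for $w$ with $v' \in Y^k(T)$ and killing the multiplier terms via $Y^k(T) \subseteq Z^k_0(T) \perp \hs W^{d-k}(T)$ and $Y^k(T) \perp \rmd Z^{k-1}_0(T)$. The paper's write-up is terser (it absorbs the $\kappa_T a$ and $\kappa_T b$ terms into a single $w \in W^{d-k}(T)$ using the integration by parts already performed in the well-posedness proof), but the argument is the same.
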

\begin{proof}
  Closedness under traces follows from the definition of $A^k(T)$, that already took into account all the traces.

  Closedness under the exterior derivative will follow from the fact that if $u \in B^k(T)$ then $\rmd u \in B^{k+1}(T)$. Suppose then that $u \in B^k(T)$. We have for all $u' \in X^k_0(T)$:
  \begin{equation}
\int_T \rmd u \cdot \rmd u' + \int_T u' \wedge w + \int_T u' \cdot \rmd v = 0.
  \end{equation}
  If now $u'\in Y^k(T)$, the second and third terms are 0. This gives the last constraint on $\rmd u$.

The two finite dimensional constraints are trivially satisfied.
The top equation is also trivially satisfied, with zero Lagrange multipliers.
\end{proof}

\begin{proposition}[Degrees of Freedom]
On $A^k(T)$ the integrals on $k$-faces of $T$ constitute unisolvent degrees of freedom.
\end{proposition}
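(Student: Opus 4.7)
The plan is to prove that the evaluation map
\[
A^k(T) \to \bbR^{N_k}, \qquad u \mapsto \left( \ts \int_{T''} u|_{T''}\right)_{T''\text{ a $k$-face of }T},
\]
is both injective and surjective, proceeding by induction over the poset of faces of $T$ starting from the $k$-faces. On a $k$-face $T''$ one has $k = \dim T''$, which is the top-degree case already treated: $B^k(T'')$ consists of the constant $k$-forms on $T''$, so the integral over $T''$ is a linear isomorphism $B^k(T'') \to \bbR$. This furnishes both the base case for injectivity and the starting data for the recursive existence argument.

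For injectivity, assume $u \in A^k(T)$ has vanishing integrals over every $k$-face, and I claim $u|_{T'} = 0$ for every face $T'$ of dimension $j \geq k$, by induction on $j$. The case $j = k$ is immediate from the previous paragraph. For the inductive step, let $T'$ have dimension $j+1 > k$. By the inductive hypothesis $u$ vanishes on every sub-face of $T'$, so $u|_{T'} \in X^k_0(T')$, and $u|_{T'} \in B^k(T')$ by definition of $A^k$. The crux is the identity
\[
B^k(T') \cap X^k_0(T') = 0 \qquad \text{whenever } k < \dim T'.
\]
To prove it, test the first defining equation of $B^k$ with $u' = u|_{T'}$; taking $a' = a$, $b' = b$, $v' = v$ in the constraint equations kills the three Lagrange-multiplier terms, leaving $|\rmd u|^2 = 0$. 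Next, an integration by parts in the constraint $\int \rmd u \wedge \kappa_{T'} a' = 0$ (permissible because $u$ has zero trace) combined with $\rmd \kappa_{T'} a' = (\dim T' - k) a'$ on constants yields $\int u \wedge a' = 0$ for every $a' \in \rmP^0\Lambda^{\dim T' - k}(T')$. This, paired with the constraint on $b'$ and the Whitney-form representation \eqref{eq:wprime}, places $u$ in $Z^k_0(T')$. The last defining equation then places $u$ in $Y^k(T')$. Since $k < \dim T'$, Proposition~\ref{prop:zexact} identifies $\ker \rmd|_{Z^k_0(T')}$ with $\rmd Y^{k-1}(T')$, so $u \in \rmd Y^{k-1}(T')$, while $u \in Y^k(T') \subseteq (\rmd Z^{k-1}_0(T'))^{\perp} \subseteq (\rmd Y^{k-1}(T'))^{\perp}$. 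Being both in $\rmd Y^{k-1}(T')$ and its orthogonal complement forces $u = 0$.

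For surjectivity, given prescribed values $c_{T''}$ on the $k$-faces, I construct $u \in A^k(T)$ realizing them by an increasing-dimension sweep. On each $k$-face set $u|_{T''}$ to be the constant form with integral $c_{T''}$. For $j = k+1, \dots, d$ and each $j$-face $T'$, appeal to the well-posedness proposition for the extension problem of $B^k(T')$ to produce $u|_{T'} \in B^k(T')$ with prescribed boundary data equal to the already-assembled trace on $\partial T'$; this data is compatible across sub-face intersections because every contributing piece was built as a trace of a form in $B^k$, and is therefore of the extra boundary regularity that supports the extension. The resulting field glues into $u \in X^k(T)$ with $u|_{T'} \in B^k(T')$ for every face, i.e. $u \in A^k(T)$, and it realises the prescribed integrals. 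Combined with injectivity this establishes unisolvence and identifies $\dim A^k(T)$ with the number of $k$-faces.

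The main obstacle will be the lemma $B^k(T') \cap X^k_0(T') = 0$. One has to juggle the three layers of constraints in the definition of $B^k$ simultaneously, and crucially perform the by-parts step that converts the constraint on $\rmd u \wedge \kappa_{T'} a'$ into a constraint on $u$ itself of the form $\int u \wedge a' = 0$, so that $u$ lands in $Z^k_0(T')$ and Proposition~\ref{prop:zexact} can be invoked. Once this bookkeeping is set up, the remainder of the argument is a clean assembly of the exactness of $Z^\bs_0(T')$ with the definition of $Y^k(T')$ as an orthogonal complement.
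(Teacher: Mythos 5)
Your proposal is correct and follows essentially the same route as the paper: injectivity by following traces down to the $k$-faces (where the fields are constants determined by their integrals), using uniqueness for the homogeneous extension problem at each dimension, and surjectivity by recursive extension via the well-posedness of the local PDE. Your lemma $B^k(T') \cap X^k_0(T') = 0$ is precisely the homogeneous-problem uniqueness already established inside the paper's well-posedness proposition (land in $Z^k_0$, then $Y^k$, then $\rmd u = 0$, then use exactness and orthogonality), so you are unpacking in detail what the paper's two-sentence proof simply cites.
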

\begin{proof}
Any given $u \in A^k(T)$ is determined by its codimension one traces, that can be followed all the way down to faces of dimension $k$, where $u$ must be constant, hence determined by its integral. Conversely from any values of these integrals, a $u \in A^k(T)$ can be reconstructed by recursive extension of the corresponding constant forms on $k$-cells.
\end{proof}

\begin{proposition}[Local sequence exactness]
The sequence $A^\bs(T)$ is exact (more precisely it resolves $\bbR$).
  \end{proposition}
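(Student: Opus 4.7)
The plan is to identify the complex $(A^\bs(T), \rmd)$ with the cellular cochain complex of $T$, viewed as a regular CW complex whose cells are all its faces, and then exploit that the latter is a resolution of $\bbR$ because $T$ is contractible.

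By the preceding proposition, integration on $k$-faces provides an isomorphism $\iota^k \colon A^k(T) \to \cochain^k$, where $\cochain^k := \bbR^{\{k\text{-faces of } T\}}$. First I would verify that $\iota^\bs$ intertwines $\rmd$ with the cellular coboundary $\delta$. Fix a $(k+1)$-face $\sigma$ of $T$ and $u \in A^k(T)$. By construction $u|_\sigma \in B^k(\sigma) \subseteq X^k(\sigma)$ and, by the iterated-trace regularity recalled in Section \ref{sec:not}, $u|_\sigma$ has well-defined pullbacks to every $k$-face $\tau$ of $\partial \sigma$, each of which is a constant $k$-form on $\tau$. Stokes' theorem in this $X^k$-setting then gives
\begin{equation*}
  \int_\sigma \rmd u \;=\; \int_\sigma \rmd(u|_\sigma) \;=\; \int_{\partial \sigma} u|_\sigma \;=\; \sum_{\tau \subsetneq \sigma, \, \dim \tau = k} \epsilon_{\tau \sigma} \int_\tau u,
\end{equation*}
with $\epsilon_{\tau \sigma} \in \{\pm 1\}$ recording relative orientations, and the right-hand side is precisely $(\delta \iota^k u)(\sigma)$.

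Next I would note that the constant functions belong to $A^0(T)$ (they satisfy the defining PDE for $B^0$ trivially on every face), providing an augmentation $\bbR \to A^0(T)$ which under $\iota^0$ becomes the standard augmentation $c \mapsto (c, \ldots, c) \in \cochain^0$. Since $T$ is a convex polytope, hence contractible, its face-CW cellular cochain complex has cohomology $\bbR$ in degree $0$ and zero elsewhere; equivalently, the augmented complex
\begin{equation*}
0 \to \bbR \to \cochain^0 \to \cochain^1 \to \cdots \to \cochain^d \to 0
\end{equation*}
is exact. For a self-contained derivation one can argue by induction on $d$, writing $T$ as a cone over one of its facets and using the resulting contraction of its face poset. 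Transporting exactness through $\iota^\bs$ yields the desired resolution of $\bbR$ by $A^\bs(T)$.

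The only subtle point is the application of Stokes' theorem at $X^k$-regularity together with the bookkeeping of orientation signs needed to recognize $\delta$ as the standard cellular coboundary; the iterated trace regularity built into the definition of $A^k(T)$ (through the recursive extension of the $B^k$ construction to all subfaces) makes both of these routine.
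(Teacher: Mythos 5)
Your argument is exactly the paper's: the proof there states in one line that the degrees of freedom provide a cochain isomorphism with the cellular cochain complex of the cell, which is exact since the cell is contractible. You have simply filled in the details (the Stokes-theorem intertwining and the augmentation), which are correct and routine.
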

\begin{proof}
  The degrees of freedom provide a cochain isomorphism with the cellular cochain complex, which is known to be exact on any given cell.
\end{proof}

\begin{proposition}[Approximation]
  The spaces $A^k(T)$ contain $\Pminus^1 \Lambda^k(T)$.
\end{proposition}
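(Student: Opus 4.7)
The strategy is to show $\Pminus^1\Lambda^k(T) \subseteq B^k(T)$ for every cell $T$; the inclusion $\Pminus^1\Lambda^k(T) \subseteq A^k(T)$ then follows by induction on $\dim T$, using that $\Pminus^1\Lambda^k$ is preserved by pullback to faces of $T$.

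Given $u \in \Pminus^1\Lambda^k(T)$, I would use the standard Koszul decomposition to write $u = u_0 + \kappa_T w$ with $u_0 \in \Poly^0\Lambda^k(T)$ and $w \in \Poly^0\Lambda^{k+1}(T)$ constant forms. Two structural observations will do most of the work. First, the Koszul homotopy identity $\rmd\kappa_T + \kappa_T\rmd = (r+k)\id$ on $\Poly^r\Lambda^k$ gives $\rmd u = (k+1)w$, a constant $(k+1)$-form; moreover a short coordinate computation on each basis term $(x^{i_j}-b_T^{i_j})\,dx^{I\setminus i_j}$ appearing in $\kappa_T w$ (whose $\rmd^\star$ equals $-\iota_{\partial_{i_j}}dx^{I\setminus i_j}=0$) yields $\rmd^\star u = 0$. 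Second, for any constant $(d-k+1)$-form $b'$, the graded Leibniz rule for $\kappa_T$ together with $\kappa_T^2 = 0$ gives $\kappa_T w \wedge \kappa_T b' = \kappa_T(w\wedge\kappa_T b')$, and since $w \wedge \kappa_T b'$ sits in degree $d+1$ it vanishes identically; thus $\kappa_T w \wedge \kappa_T b' = 0$ pointwise.

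I would then verify the four conditions defining $B^k(T)$ by taking the Lagrange multipliers to be $a=0$, $b=0$, $v=0$. The constraint $\int_T \rmd u \wedge \kappa_T a' = 0$ follows because $\rmd u$ is constant while $\kappa_T a'$ is a linear form whose scalar coefficients have zero mean on $T$ by the choice of $b_T$ as barycenter. The constraint $\int_T u \wedge \kappa_T b' = 0$ splits into $\int_T u_0 \wedge \kappa_T b' + \int_T \kappa_T w \wedge \kappa_T b'$; the first summand vanishes by the same mean-zero argument, the second by the degree observation above. The constraint $\int_T u \cdot \rmd v' = 0$ follows from $v'|_{\partial T}=0$, integration by parts, and $\rmd^\star u = 0$. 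Finally the top equation reduces, with these Lagrange multipliers, to $\int_T \rmd u \cdot \rmd u' = 0$ for all $u' \in X^k_0(T)$, which is immediate by another integration by parts since $\rmd u$ is a constant (hence closed and coclosed) form and $u'$ vanishes on $\partial T$.

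No step is a real obstacle; the main care required is the bookkeeping of the Koszul and Leibniz identities, and the verification that the degenerate cases $k=0$ and $k=d$ are subsumed by the same computation — in particular for $k=d$ the argument is consistent with the identification $B^d(T) = \Poly^0\Lambda^d(T) = \Pminus^1\Lambda^d(T)$ recorded just before the proposition.
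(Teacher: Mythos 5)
Your proof is correct, and its skeleton coincides with the paper's: take all Lagrange multipliers to be zero, observe that the harmonic-extension equations reduce to $\rmd^\star u = 0$ and $\rmd^\star \rmd u = 0$ (which Whitney forms satisfy), and note that the real content lies in the two finite-dimensional constraints, which vanish because of the choice of $b_T$ as the barycentre. Where you genuinely diverge is in how you kill the constraint integrals. The paper invokes the closure of trimmed forms under the wedge product, $\wedge : \Pminus^1\Lambda^k \times \Pminus^1\Lambda^{l} \to \Pminus^2\Lambda^{k+l}$, to conclude that $u \wedge \kappa_T b'$ is a linear $d$-form, and then applies the midpoint quadrature rule (exact for linear integrands) at the barycentre, where $\kappa_T$ vanishes. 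You instead split $u = u_0 + \kappa_T w$ and handle the two pieces separately: the $u_0 \wedge \kappa_T b'$ piece by the zero mean of the coefficients $x^i - b_T^i$, and the $\kappa_T w \wedge \kappa_T b'$ piece by the graded Leibniz rule for contraction together with $\kappa_T^2 = 0$ and a degree count, which gives \emph{pointwise} vanishing rather than vanishing of the integral. Your route is more self-contained (it avoids citing the wedge-product theorem for trimmed forms) and gives the slightly stronger pointwise statement for half of the integrand; the paper's route is shorter once the wedge-product lemma is available and generalizes more readily to higher order. The remaining steps you supply (the explicit check that $\rmd^\star \kappa_T w = 0$ termwise, the integration by parts against $u', v' \in X^\bs_0(T)$ with vanishing boundary terms, the reduction to faces via pullback-invariance of $\Pminus^1\Lambda^k$, and the degenerate cases $k=0$, $k=d$) are all sound and merely make explicit what the paper leaves implicit.
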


\begin{proof}
Whitney forms satisfy $\rmd^\star u = 0$ and $\rmd^\star \rmd u = 0$, so the crucial points are the finite dimensional constraints.

  It was proved in \cite[Preprint, Proposition 3.9]{Chr07NM} that $\wedge : \Pminus^r \Lambda^k(T) \times \Pminus^s \Lambda^l(T) \to \Pminus^{r+s}\Lambda^{k+l}(T)$. A simpler proof based on the Koszul operator, rather than barycentric coordinates, was then provided in \cite[Equation (3.16)]{ArnFalWin06}. Based on this result, if $u\in \Pminus^1 \Lambda ^k(T)$ and $b' \in \rmP^0 \Lambda^{d-k+1}(T)$ we have that $u \wedge \kappa_T b$ is a linear $d$-form. Its integral can therefore be obtained from evaluation at the barycentre of $T$. Since furthermore $\kappa_T$ was constructed with respect to this point, we get that $\int_T u \wedge \kappa_T b' = 0$. Similarly $\int_T \rmd u \wedge \kappa_T a' = 0$ for $a' \in \rmP^0 \Lambda^{d-k}(T)$. 
\end{proof}

\begin{proposition}[Computation of moments]\label{prop:momconst}
For $u \in A^k(T)$, the integrals $\int_T u \wedge a'$ against constant $a' \in \rmP^0 \Lambda^{d-k} (T)$ can be computed recursively from the degrees of freedom.
\end{proposition}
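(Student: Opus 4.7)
The plan is to induct on the codimension $d-k$, where $d = \dim T$. In the base case $d=k$, the cell $T$ is itself a $k$-face, the space $B^k(T)$ consists of constant $k$-forms, $a' \in \rmP^0\Lambda^0(T) \simeq \bbR$ is a scalar, and $\int_T u \wedge a' = a' \int_T u$ is simply $a'$ times the degree of freedom of $u$ on $T$.

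For the inductive step ($d > k$) the engine is the Koszul--Euler identity $\rmd \kappa_T a' = (d-k)\, a'$, which holds because $a'$ is constant. Since $u \in A^k(T) \subseteq B^k(T)$, the finite-dimensional constraint $\int_T \rmd u \wedge \kappa_T a' = 0$ is already built into the defining equations of $B^k(T)$. Substituting and integrating by parts via Stokes gives
\[
\int_T u \wedge a' \;=\; \frac{1}{d-k}\int_T u \wedge \rmd \kappa_T a' \;=\; \frac{(-1)^k}{d-k}\int_{\partial T} u \wedge \kappa_T a',
\]
which reduces the computation to a sum of boundary integrals $\int_{T'} u|_{T'} \wedge (\kappa_T a')|_{T'}$ over the codimension-one faces $T' \prec T$.

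To activate the inductive hypothesis on each $T'$, the next step is to rewrite $(\kappa_T a')|_{T'}$ as a combination of something killed by the $B^k(T')$ constraints and something genuinely constant on $T'$. Writing $x - b_T = (x - b_{T'}) + c$ with $c := b_{T'} - b_T$ a fixed ambient vector, and using that $x - b_{T'}$ is tangent to the affine hull of $T'$, a direct evaluation on tangential multivectors yields the decomposition
\[
(\kappa_T a')|_{T'} \;=\; \kappa_{T'}(a'|_{T'}) \;+\; (\iota_c a')|_{T'}.
\]
The first summand contributes nothing, because $u|_{T'} \in B^k(T')$ by the closure under traces built into $A^k(T)$, and $a'|_{T'} \in \rmP^0\Lambda^{d-k}(T') = \rmP^0\Lambda^{\dim T' - k + 1}(T')$ is precisely a legal test multiplier for the constraint $\int_{T'} u|_{T'} \wedge \kappa_{T'}(a'|_{T'}) = 0$ (and for $k=0$ this summand vanishes outright for form-degree reasons). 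The second summand $(\iota_c a')|_{T'}$ is a constant $(\dim T' - k)$-form on $T'$, so the inductive hypothesis applies to $\int_{T'} u|_{T'} \wedge (\iota_c a')|_{T'}$ and expresses it from the degrees of freedom of $u|_{T'}$, which are by construction a subset of those of $u$.

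The main obstacle is the verification of the splitting $(\kappa_T a')|_{T'} = \kappa_{T'}(a'|_{T'}) + (\iota_c a')|_{T'}$, together with the bookkeeping that matches form and polynomial degrees so that precisely one $B^k(T)$ constraint is consumed in the bulk and precisely one $B^k(T')$ constraint is consumed on each face. Beyond that, the argument is a clean descent through the face poset in which orientation signs play no essential role.
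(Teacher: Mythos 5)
Your proof is correct and follows essentially the same route as the paper's: use the constraint $\int_T \rmd u \wedge \kappa_T a' = 0$ together with Stokes and $\rmd\kappa_T a' = (d-k)a'$ to push the computation to $\partial T$, then split $(\kappa_T a')|_{T'}$ into $\kappa_{T'}(a'|_{T'})$ (killed by the face constraint) plus the constant form $(\iota_{b_{T'}-b_T}a')|_{T'}$, and recurse down to the $k$-faces where the integrals are degrees of freedom. Your explicit constant $(-1)^k/(d-k)$ and the tangency argument for the Koszul splitting are both right, and merely make explicit what the paper states tersely.
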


\begin{proof}
  Let $T$ be a cell of dimension $d$. The case $k = d$ is immediate, so we suppose $k < d$.

  We integrate by parts the first constraint (as before, but now with a boundary term):
  \begin{equation}
    \int_T \rmd u \wedge \kappa_T a'  = \int_{\partial T} u \wedge \kappa_T a' + \int_T u \wedge \frac{(-1)^{k+1}}{d-k} a'. 
  \end{equation}
  The left side is zero by definition. On the right side, the second term is of the form that we would like to compute.

  We are left with the problem of evaluating the first term on the right side. It will be decomposed into integrals on the $(d-1)$ dimensional faces $T'$ of $T$. That is, we require terms of the form:
  \begin{equation}
    \int_{T'} u \wedge \kappa_T a'.
  \end{equation}
We use that $(\kappa_T -\kappa_{T'})a'$ is constant on $T'$ and that by definition:
  \begin{equation}
    \int_{T'} u \wedge \kappa_{T'} a' = 0.
  \end{equation}
The boundary term has been reduced to integrals of $u$ wedged against constants.

This method can be pursued, decreasing the dimension by one at each step, all the way down to the case where the integrals against constants are actually degrees of freedom. 
  \end{proof}

For a given $u \in A^k(T)$ one can then compute the $\rmL^2(T)$ projection of $u$ onto constants $\Poly^0 \Lambda^k(T)$, from its degrees of freedom. We denote it as $\gamma_T u$.  In practice one computes $\gamma_{T'} u$ starting with $\dim T' = k$ and increasing dimension of $T'$ by one at each step.

In \S \ref{sec:algo} we sketch how $\gamma_T$ is used as a crucial ingredient in the design of some numerical methods, that are amenable to an analysis based on Strang's first lemma \cite[Theorem 26.1 p. 192]{Cia91}.

\section{High order case\label{sec:high}}
We fix a polynomial degree $r$ with $r \geq 1$. We will define a method that properly reconstructs $k$-forms that are polynomials of degree $r$. More precisely we will define spaces $A^k(T)$ that contain $\Poly^r \Lambda^k(T)$ and such that the spaces $\Pminus^r \Lambda^{d'-k}(T')$ (with $d' = \dim T'$) constitute unisolvent degrees of freedom. The crucial point, in practice, is that we can compute the more general moments against $\Poly^r \Lambda^{d'-k} (T')$ explicitely (just removing the minus).

\begin{remark}[Link with low order case] To a large extent the low order case treated previously would correspond to $r=0$. However the theory is different on some points, because the spaces $\Pminus^0 \Lambda^k(T)$ (which look trivial but are the degrees of freedom of the Whitney forms) lack some of the properties of $\Pminus^r \Lambda^k(T)$, for $r \geq 1$.
\end{remark}
  
We set:
\begin{equation}
  U^k(T) = \{u \in \Pminus^{r+1} \Lambda^k(T) \ : \ u \perp \Poly^{r} \Lambda^k(T)\}.
\end{equation}
Orthogonality is with respect to the $\rmL^2$ product. For instance:
\begin{equation}
  U^0(T) = \{u \in \Poly^{r+1} (T) \ : \ u \perp \Poly^{r}(T)\}.
\end{equation}
In particular this space does not contain any non-zero constant function. We also notice that for $d = \dim T$:
\begin{equation}
  U^{d}(T) = 0.
\end{equation}
Then we change the previous definition we have of $W^k(T)$ to the following:
\begin{equation}\label{eq:defw}
  W^k(T) = \rmd U^{k-1}(T) + U^k(T).
\end{equation}
This entails:
\begin{align}
  W^0 & = U^0,\\
  W^d & = \rmd U^{d-1}.
\end{align}

\begin{proposition} We have:
\begin{itemize}  
\item  The decomposition (\ref{eq:defw}) is $\rmL^2$-orthogonal.

\item The following map is bijective:
  \begin{equation}
    \mapping{U^{k-1} \times U^k}{W^k(T),}{(u', u)}{\rmd u' + u.}
    \end{equation}
 
\item The sequence $W^\bs(T)$ is exact, including at indices $0$ and $d$.
\end{itemize}
\end{proposition}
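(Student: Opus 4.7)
The plan is to lean on two standard facts about trimmed polynomial differential forms on a cell $T$: (a) $\rmd$ sends $\Poly^{r+1}\Lambda^j(T)$ into $\Poly^r\Lambda^{j+1}(T)$, and (b) the complex $\Pminus^{r+1}\Lambda^\bs(T)$ is an exact resolution of $\bbR$, a standard FEEC result.

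First, for the orthogonality claim, I would observe that for $u' \in U^{k-1}$ the inclusion $U^{k-1} \subseteq \Poly^{r+1}\Lambda^{k-1}(T)$ together with (a) gives $\rmd u' \in \Poly^r\Lambda^k(T)$; the defining orthogonality $U^k \perp \Poly^r\Lambda^k(T)$ then delivers $\langle \rmd u', u\rangle = 0$ for all $u \in U^k$.

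For bijectivity of $(u',u) \mapsto \rmd u' + u$, surjectivity is built into the definition~(\ref{eq:defw}). For injectivity I would use the orthogonality just proved: if $\rmd u' + u = 0$, then $|\rmd u'|^2 + |u|^2 = 0$, hence $u = 0$ and $\rmd u' = 0$. Concluding $u' = 0$ amounts to showing $\rmd$ is injective on $U^{k-1}$. By (b), a closed form in $\Pminus^{r+1}\Lambda^{k-1}(T)$ is either a constant (case $k = 1$) or the exterior derivative of some $v \in \Pminus^{r+1}\Lambda^{k-2}(T)$; in either case (a) places it in $\Poly^r\Lambda^{k-1}(T)$, and the defining orthogonality of $U^{k-1}$ forces $u' = 0$. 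This is the one step that requires real content; the rest is bookkeeping.

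Finally, for exactness of $W^\bs(T)$, the identity $\rmd W^k = \rmd(\rmd U^{k-1} + U^k) = \rmd U^k \subseteq W^{k+1}$ shows the spaces form a complex. Given a closed $w \in W^k$, I would decompose $w = \rmd u' + u$ via the second item; then $\rmd u = 0$, and the same $\rmd$-injectivity argument applied to $U^k$ gives $u = 0$, so $w = \rmd u' \in \rmd W^{k-1}$, using $U^{k-1} \subseteq W^{k-1}$. Exactness at index $0$ is immediate from $W^0 = U^0 \perp \Poly^0(T)$, which excludes nonzero constants from the kernel. Exactness at index $d$ follows from $U^d = 0$, which gives $W^d = \rmd U^{d-1} \subseteq \rmd W^{d-1}$ and hence surjectivity at the top.
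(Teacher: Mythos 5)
Your proof is correct and follows essentially the same route as the paper: orthogonality from $\rmd U^{k-1}\subseteq \Poly^r\Lambda^k(T)$, injectivity of $\rmd$ on $U^{k-1}$ from the fact that a closed element of $\Pminus^{r+1}\Lambda^{k-1}(T)$ lies in $\Poly^r\Lambda^{k-1}(T)$, and exactness by splitting a closed $w=\rmd u'+u$ and killing $u$. The only difference is that you justify the step ``closed implies degree $\leq r$'' explicitly via exactness of the trimmed complex, whereas the paper simply asserts it; that is a welcome clarification, not a deviation.
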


\begin{proof}
\tpoint  Let $u' \in U^{k-1}(T)$ then $\rmd u' \in \Poly^{r} \Lambda^k(T)$ so $\rmd u' \perp U^k(T)$.

\tpoint  We also notice that if $\rmd u' = 0$ then $u' \in \Poly^{r} \Lambda^{k-1}(T)$ so $u' = 0$.

\tpoint  Put now $w = \rmd u' + u$, with $u' \in U^{k-1}(T)$ and $u \in U^k(T)$. If $\rmd w = 0$ then $\rmd u = 0$. Then $u \in \Poly^{r} \Lambda^k(T)$, so $u =0$.  So $w = \rmd u' \in \rmd W^{k-1}(T)$.
\end{proof}

\begin{proposition} We have direct sum decompositions:
  \begin{equation}
    \Pminus^{r+1} \Lambda^k(T) = \Pminus^r \Lambda^k(T) \oplus W^k(T).\label{eq:pmpmw}
  \end{equation}
  \end{proposition}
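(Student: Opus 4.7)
The plan is to reduce the claim to a cleaner decomposition of $\Poly^r\Lambda^k(T)$, and then to establish directness via a top-degree Koszul argument. First, $\Poly^r\Lambda^k(T) \subseteq \Pminus^{r+1}\Lambda^k(T)$ because $\kappa_T$ raises polynomial degree by at most one; the defining orthogonality of $U^k(T)$ then gives the $\rmL^2$-orthogonal decomposition $\Pminus^{r+1}\Lambda^k(T) = \Poly^r\Lambda^k(T) \oplus U^k(T)$. Combined with the already-established splitting $W^k(T) = \rmd U^{k-1}(T) \oplus U^k(T)$ from the previous proposition, the claim reduces to
\begin{equation*}
\Poly^r\Lambda^k(T) = \Pminus^r\Lambda^k(T) \oplus \rmd U^{k-1}(T).
\end{equation*}

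For the spanning part $\Poly^r\Lambda^k(T) = \Pminus^r\Lambda^k(T) + \rmd U^{k-1}(T)$, I would decompose any $p \in \Poly^r\Lambda^k(T)$ into its homogeneous polynomial parts $p_j$ and apply the Koszul identity $\kappa_T\rmd p_j + \rmd\kappa_T p_j = (j+k)p_j$ to each (the case $j + k = 0$ is trivial, as $p_j$ is then a constant and lies in $\Pminus^r\Lambda^k(T)$). The term $\kappa_T\rmd p_j$ lies in $\Pminus^r\Lambda^k(T)$ since $\kappa_T^2 = 0$. For $\rmd\kappa_T p_j$: since $\kappa_T p_j \in \Pminus^{j+1}\Lambda^{k-1}(T) \subseteq \Pminus^{r+1}\Lambda^{k-1}(T)$, the analogue of the orthogonal splitting above at level $k-1$ decomposes it into a $\Poly^r\Lambda^{k-1}(T)$ piece (whose $\rmd$ lies in $\Poly^{r-1}\Lambda^k(T) \subseteq \Pminus^r\Lambda^k(T)$) and a $U^{k-1}(T)$ piece (whose $\rmd$ lies in $\rmd U^{k-1}(T)$).

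The main obstacle is directness, i.e., $\Pminus^r\Lambda^k(T) \cap \rmd U^{k-1}(T) = 0$. Suppose $\rmd u' \in \Pminus^r\Lambda^k(T)$ with $u' \in U^{k-1}(T)$. Split $u' = v + w$ into its polynomial-degree-$\leq r$ part $v$ and its homogeneous polynomial-degree-$(r+1)$ part $w$. Since $u' \in \Pminus^{r+1}\Lambda^{k-1}(T)$ and $\kappa_T w$ is homogeneous of degree $r+2$, the $\Pminus^{r+1}$ constraint forces $\kappa_T w = 0$. The homogeneous-degree-$r$ part of $\rmd u'$ is $\rmd w$, and membership $\rmd u' \in \Pminus^r$ demands $\kappa_T\rmd w \in \Poly^r$; being homogeneous of degree $r+1$, $\kappa_T\rmd w$ must then vanish. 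Koszul now reads $(r+k)w = \kappa_T\rmd w + \rmd\kappa_T w = 0$, giving $w = 0$ (using $r \geq 1$). Thus $u' = v \in \Poly^r\Lambda^{k-1}(T)$, and the orthogonality $u' \perp \Poly^r\Lambda^{k-1}(T)$ built into $U^{k-1}(T)$ forces $u' = 0$.

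Finally, to assemble the three-piece directness for the original claim: if $p + \rmd u' + u = 0$ with $p \in \Pminus^r\Lambda^k(T)$, $u' \in U^{k-1}(T)$, $u \in U^k(T)$, the $\rmL^2$ pairing against $u$ first eliminates $u$ (as $p, \rmd u' \in \Poly^r\Lambda^k(T) \perp U^k(T)$), the top-degree argument above then eliminates $u'$ (and hence $\rmd u'$), leaving $p = 0$.
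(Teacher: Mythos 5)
Your proof is correct. The engine is the same as the paper's --- the Koszul homotopy identity combined with the orthogonal splitting $\Pminus^{r+1}\Lambda^k(T)=\Poly^{r}\Lambda^k(T)\oplus U^k(T)$ --- but you organize it around a different decomposition and a different key lemma. The paper first proves the ``preliminary observation'' that any $u\in U^k(T)$ with $\rmd u\in\Pminus^{r}\Lambda^{k+1}(T)$ must vanish, using the inhomogeneous identity $(\rmd\kappa_T+\kappa_T\rmd)u=(k+r+1)u+Q_r u$; applied twice (first to the $U^k$ component, then to the $U^{k-1}$ one) this gives directness in one stroke. You instead reduce the whole statement to the cleaner intermediate decomposition $\Poly^{r}\Lambda^k(T)=\Pminus^{r}\Lambda^k(T)\oplus\rmd U^{k-1}(T)$ and obtain directness by isolating the top homogeneous component $w$ of $u'$, showing $\kappa_T w=0$ and $\kappa_T\rmd w=0$, and invoking the homogeneous Koszul identity $(r+k)w=\rmd\kappa_T w+\kappa_T\rmd w$. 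Your route makes the decomposition of $\Poly^r\Lambda^k(T)$ explicit, which is a nice structural byproduct; the paper's lemma is somewhat more economical in that a single statement disposes of both components at once. The spanning arguments are essentially the same (split $\kappa_T$ of the degree-$r$ part into its $\Poly^{r}\Lambda^{k-1}(T)$ and $U^{k-1}(T)$ pieces), yours working homogeneous degree by homogeneous degree where the paper carries the correction term $Q_{r-1}$ instead.
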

\begin{proof}
  \tpoint We make a preliminary observation. \\
Referring to \cite[Theorem 3.1]{ArnFalWin06},  we write, for $u \in \Poly^{r+1}\Lambda^k(T)$:
  \begin{equation}
    (\rmd \kappa_T + \kappa_T \rmd)u = (k + r+1) u + Q_r u,\textrm{with } Q_r u\in \Poly^{r}\Lambda^k(T).  
  \end{equation}
  Suppose now that $u \in U^k$ and that $\rmd u \in \Pminus^r \Lambda^{k+1}(T)$. We have:
  \begin{equation}
    (k + r + 1) u = \rmd \kappa_T u + \kappa_T \rmd u - Q_r u.
  \end{equation}
  The three terms on the right side are in $ \Poly^{r}\Lambda^k(T)$. Hence $u=0$.
  
  \tpoint  We now show that the sum is direct.\\
  Let $u' \in U^{k-1}(T)$ and $u \in U^k(T)$ and suppose $w = \rmd u' + u \in  \Pminus^r \Lambda^k(T)$. Then $\rmd u \in \Poly^{r-1}\Lambda^{k+1}(T)$ so $\rmd u = 0$, so $u= 0$. Then $\rmd u' \in  \Pminus^r \Lambda^k(T)$ so $u'=0$.
  
  \tpoint We now show that $\Pminus^{r+1} \Lambda^k(T) \subseteq \Pminus^r \Lambda^k(T) \oplus W^k(T)$.\\
  Suppose that $v \in  \Pminus^{r+1} \Lambda^k(T)$. Write first:
  \begin{equation} \label{eq:decvuv}
  v = u + v', \textrm{with } u \in U^k \textrm{ and } v' \in \Poly^{r}\Lambda^k(T).
  \end{equation}
  Next, as a variant of the above decompositon, write:
  \begin{equation}
  (\rmd \kappa_T + \kappa_T \rmd)v' = (k + r) v' + Q_{r-1} v',\textrm{with } Q_{r-1} v'\in \Poly^{r-1}\Lambda^k(T),
  \end{equation}
  and furthermore:
  \begin{equation}
  \kappa_T v' = u' + v'', \textrm{with } u' \in U^{k-1} \textrm{ and } v'' \in \Poly^{r}\Lambda^{k-1}(T).
  \end{equation}
  We now have:
  \begin{equation}
  (k+r)v' = \rmd u' + \rmd v'' + \kappa_T \rmd v' - Q_{r-1} v'.
  \end{equation}
The first term on the right side is in $\rmd U^{k-1}$ and the three others are in $\Pminus^r \Lambda^k(T)$. Inserting this in (\ref{eq:decvuv}) we are done.
  \end{proof}

We keep the previous definition of $Z^k_0(T)$, given the new definition of $W^{d-k}(T)$, viz.:
\begin{equation}
Z^k_0(T)  = \{u \in X^k_0(T) \ : \ \forall w \in W^{d-k}(T) \quad \int_T u \wedge w = 0\}.
\end{equation}
We remark that $Z^\bs_0(T)$ has the same exactness property as in Proposition \ref{prop:zexact}, since $W^\bs(T)$ is exact. We similarly define $Y^k(T)$ given the new definition of $Z^k_0(T)$, viz.:
\begin{equation}
  Y^k(T) = \{ u \in Z^k_0(T) \ : \ u \perp \rmd Z^{k-1}_0(T) \}.
\end{equation}

We define $G^k(T)$ as follows:
\begin{equation}
  G^k(T) = \{ (f, g) \in \Poly^{r-1} \Lambda^{k}(T) \times \Poly^{r-1} \Lambda^{k-1}(T) \ : \ \rmd^\star f = 0,\ \rmd^\star g = 0\}.
\end{equation}
A motivating observation for this definition is that for $u \in \Poly^{r} \Lambda^{k}(T)$ we have:
\begin{equation}
  (\rmd^\star \rmd u, \rmd^\star u) \in G^k(T).
\end{equation}
Concerning $k = d$ we note that $\rmd^\star f = 0$ means that $f \in \Poly^0 \Lambda^d(T) \approx \bbR$.

We define $B^k(T)$ to be the subspace of $X^k(T)$ consisting of fields $u \in X^k(T)$ such that the following system of PDEs holds, for some $(f,g) \in G^k(T)$:
\begin{align}\label{eq:defApde}
  \int_T \rmd u \cdot \rmd u' + \int_T \rmd u' \wedge a + \int_T u' \wedge b + \int_T u' \cdot \rmd v & = \int_T f \cdot u',\\
  \int_T \rmd u \wedge a' =0 ,\ \int_T u \wedge b' & = 0 ,\\
  \int_T u \cdot \rmd v' & = \int_T g \cdot v',
\end{align}
for all $u' \in X^k_0(T)$, for some $a \in U^{d-k-1}(T)$, some $b \in U^{d-k}(T)$, and some $v \in Y^{k-1}(T)$,  for all $v' \in Y^{k-1}(T)$, all $a' \in U^{d-k-1}(T)$ and all $b' \in U^{d-k}(T)$.

\begin{remark}[Old FES interpretation of VEM and DDR]
Removing the finite dimensional space of constraints represented by $a,a'$ and $b,b'$, and allowing general $v,v'$, we obtain the harmonic extension used in \cite{ChrGil16}. Indeed the equations considered there were:
\begin{align}
  \rmd^\star \rmd u + \rmd v & = f,\\
  \rmd^\star u & = g.
\end{align}
for some $(f,g) \in G^k(T)$. When $k = d$ the top row is replaced by $\int_T u = \int_T f$.

There, we proved that the spaces of solutions to these PDEs define a FES which is \emph{compatible} in the sense of \cite[Definition 5.12]{ChrMunOwr11} and that the spaces $\Pminus^r \Lambda^k(T)$ give unisolvent degrees of freedom. 

From this point of view, the problem was to show that the discrete $\rmL^2$ product, as defined in \cite{DiPDroRap20,BonDiPDroHu25}, is \emph{consistent} with the true $\rmL^2$ product, on this FES, in the sense of Strang's lemmas on variational crimes. We could not settle this question. This paper provides a modification of the PDEs leading to an alternative approach: the PDE is more elaborate but consistency is easy to prove, due to Proposition \ref{prop:hmom} below. 
\end{remark}

For the case $k =0$ of the definition of $B^k(T)$ we have that the system becomes:
\begin{align}
  \int_T \rmd u \cdot \rmd u' + \int_T \rmd u' \wedge a & = \int_T f \cdot u',\\
  \int_T \rmd u \wedge a' & =0 .
\end{align}
For some $a \in U^1(T)$ and all $a' \in U^1(T)$.

Concerning the case $k = d$ we have:
\begin{proposition}
  We have:
  \begin{equation}
    B^d(T) = \Poly^r\Lambda^d(T).
  \end{equation}
\end{proposition}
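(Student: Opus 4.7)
The plan is to specialize the defining system to $k = d$, where several terms vanish automatically: $U^{-1}(T) = 0$ eliminates the $a, a'$ terms; $\rmd u = \rmd u' = 0$ in top degree; and $X^d_0(T) = X^d(T)$ since the pullback of a $d$-form to a $(d-1)$-face is automatically zero. The first equation then reduces to an equation involving only the auxiliary data $b, v, f$ (namely $b + \hs(\rmd v) = \hs f$ pointwise), which can be satisfied trivially by $b = v = f = 0$. Hence the only nontrivial conditions on $u \in X^d(T)$ are (C1) $\int_T u \wedge b' = 0$ for all $b' \in U^0(T)$, and (C2) $\int_T u \cdot \rmd v' = \int_T g \cdot v'$ for all $v' \in Y^{d-1}(T)$, for some $g \in \Poly^{r-1}\Lambda^{d-1}(T)$ with $\rmd^\star g = 0$.

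For the inclusion $\Poly^r\Lambda^d(T) \subseteq B^d(T)$, given $u \in \Poly^r\Lambda^d(T)$ I would take $g = \rmd^\star u$ (which lies in $\Poly^{r-1}\Lambda^{d-1}(T)$ and is co-closed by $\rmd^\star\rmd^\star = 0$) together with $f = 0$ and $b = v = 0$. Then (C1) holds because $\hs u \in \Poly^r(T)$ is $\rmL^2$-orthogonal to $U^0(T) = \Poly^{r+1}(T) \ominus \Poly^r(T)$, and (C2) is just the adjoint identity $\int_T u \cdot \rmd v' = \int_T \rmd^\star u \cdot v'$, valid because $v' \in X^{d-1}_0(T)$ has zero trace, so the boundary term from integration by parts vanishes.

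For the reverse inclusion, let $u \in B^d(T)$. I would first represent $g$ as a $\rmd^\star$ of a polynomial $d$-form: since $\rmd^\star g = 0$ translates to $\rmd(\hs g) = 0$ and $\hs g \in \Poly^{r-1}\Lambda^1(T)$ is a closed polynomial $1$-form on the star-shaped cell $T$, the polynomial Poincar\'e lemma (implemented via the Koszul operator) produces $\phi \in \Poly^r(T)$ with $\hs g = \rmd \phi$, so that $\psi := \hs \phi \in \Poly^r\Lambda^d(T)$ satisfies $\rmd^\star \psi = g$ up to a sign that can be absorbed into $\psi$. Integration by parts in (C2), again with no boundary term since $v'$ has zero trace, yields $\int_T (u - \psi) \cdot \rmd v' = 0$ for all $v' \in Y^{d-1}(T)$.

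The final step combines this with (C1). Using the isomorphism $\rmd : Y^{d-1}(T) \to \ker \int_T|_{Z^d_0(T)}$ together with the definition of $Z^d_0(T)$, the set $\{\rmd v' : v' \in Y^{d-1}(T)\}$ corresponds, under the Hodge identification of $d$-forms with scalar functions in $\rmL^2(T)$, exactly to the $\rmL^2$-orthogonal complement of $U^0(T) + \Poly^0(T)$. Since $\hs \psi = \phi \in \Poly^r(T) \perp U^0(T)$, applying (C1) to both $u$ and $\psi$ gives $u - \psi$ also orthogonal to $U^0(T)$ in the $\wedge$ pairing. An elementary computation shows the orthogonal complement of $U^0(T) + (U^0(T) + \Poly^0(T))^\perp$ inside $\rmL^2(T)$ is exactly $\Poly^0(T)$, so $\hs(u - \psi) \in \Poly^0(T)$, giving $u \in \psi + \Poly^0\Lambda^d(T) \subseteq \Poly^r\Lambda^d(T)$. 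The main care needed is sign bookkeeping in the Hodge star and integration-by-parts identities; no analytical subtleties arise because $\psi$ is polynomial and $v'$ has vanishing trace.
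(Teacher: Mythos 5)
Your proof is correct, and the forward inclusion ($\Poly^r\Lambda^d(T) \subseteq B^d(T)$ via $f=0$, $g = \rmd^\star u$, zero multipliers) is exactly the paper's. For the reverse inclusion, however, you take a genuinely different route. The paper sets $\overline u$ to be the $\rmL^2$ projection of $u$ onto $\Poly^r\Lambda^d(T)$, constructs a primitive $t \in X^{d-1}_0(T)$ of $u - \overline u$ that is additionally orthogonal to $\Pminus^{r+1}\Lambda^1(T)$ (using the exactness of the constrained complex), extends the test space of the third equation from $Y^{d-1}(T)$ to all of $Z^{d-1}_0(T)$ via $\rmd^\star g = 0$, and then concludes with the energy identity $\int_T |\rmd t|^2 = \pm\int_T t \wedge \hs g = 0$. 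You instead produce a polynomial potential $\psi \in \Poly^r\Lambda^d(T)$ with $\rmd^\star \psi = g$ (polynomial Poincar\'e lemma applied to the closed $1$-form $\hs g$), reduce to $u - \psi \perp \rmd Y^{d-1}(T)$, and then identify $\rmd Y^{d-1}(T)$ explicitly, via the isomorphism onto $\ker \int_T|_{Z^d_0(T)}$, as the Hodge dual of $(U^0(T)\oplus\Poly^0(T))^\perp$; combined with the constraint $\hs(u-\psi)\perp U^0(T)$ this forces $\hs(u-\psi)\in\Poly^0(T)$. Both arguments rest on the same exactness fact for $Z^\bs_0(T)$ at index $d$, but you use it to characterize the image $\rmd Y^{d-1}(T)$ by linear algebra, whereas the paper uses it to manufacture a test function and close with an energy argument; your version buys a purely algebraic finish (no primitive $t$ and no extension of the test space is needed), at the cost of invoking the polynomial Poincar\'e lemma to realize $g$ as $\rmd^\star$ of a polynomial. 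The only thing to keep an eye on is the one you already flagged, namely sign bookkeeping in $\hs$ and in the adjunction $\int_T \psi\cdot\rmd v' = \int_T \rmd^\star\psi\cdot v'$, which is legitimate here because $\psi$ is smooth and $v'$ has vanishing trace.
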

\begin{proof}
The system becomes:
 \begin{align}
   \int_T u' \wedge b + \int_T u' \cdot \rmd v & = \int_T f \cdot u', \label{eq:firstbd}\\
  \int_T u \wedge b' & = 0 , \label{eq:secondbd}\\
  \int_T u \cdot \rmd v' & = \int_T g \cdot v', \label{eq:lastbd}
\end{align} 
 for all $u' \in X^d(T)$, some $b \in U^0(T)$ and all $b' \in U^{0}(T)$ , some $y \in Y^{d-1}(T)$ and all $y' \in Y^{d}(T)$.

 Let $\overline u$ be the $\rmL^2$ projection of $u$ on $\Poly^r\Lambda^d(T)$. Due to the second equation (\ref{eq:secondbd}) we have:
 \begin{equation}
   \int_T (u - \overline u) \wedge b' = 0,
 \end{equation}
 for all $b' \in \Poly^{r+1}(T)$. We can choose $t \in X^{d-1}_0(T)$ such that $\rmd t = u- \overline u$ and for all $c' \in \Pminus^{r+1} \Lambda^1(T)$:
 \begin{equation}
   \int_T t \wedge c' =0.
 \end{equation}
 In particular $t \in Z^{d-1}_0(T)$.

 Since $\rmd^\star g = 0$, the last equation (\ref{eq:lastbd}) is actually true for any $v' \in Z^{d-1}_0(T)$.  Using it for $v' = t$ we get:
 \begin{equation}
   \int_T | \rmd t|^2 = \int_T g \cdot t = \pm \int_T t \wedge \hs g = 0.
 \end{equation}
 We get $u = \overline u$, so $u \in \Poly^r\Lambda^d(T)$.

 This is also sufficient for the existence of a corresponding $(f,g) \in G^d(T)$, since one can choose $f= 0$ and $g = \rmd^\star u$, and remark that $u$ solves the system with $b= 0$ and $v = 0$.
\end{proof}

 \begin{proposition}
   For $k= d$ the system (\ref{eq:defApde}) can be solved iff $f = 0$. In that case $u$ is determined up to a constant.
 \end{proposition}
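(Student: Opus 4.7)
The plan has three steps. First, I would test (\ref{eq:firstbd}) with the constant $d$-form $u' = \omega$: the first term $\int_T \omega \wedge b = \int_T b$ vanishes because $b \in U^0(T) \perp \Poly^0(T)$, and the second $\int_T \omega \cdot \rmd v = \int_T \rmd v = \int_{\partial T} v$ vanishes by Stokes since $v \in X^{d-1}_0(T)$. Thus $\int_T f = 0$. The constraint $\rmd^\star f = 0$ on the $d$-form $f$ forces $f \in \Poly^0 \Lambda^d(T) = \bbR \omega$, and the vanishing integral then pins $f = 0$.

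Second, for sufficiency when $f = 0$, given $g$ with $\rmd^\star g = 0$, the $1$-form $\hs g \in \Poly^{r-1}\Lambda^1(T)$ is closed, so the Koszul/Poincaré homotopy on the star-shaped cell $T$ yields $\phi \in \Poly^r(T)$ with $\rmd \phi = \hs g$. Setting $u = \pm \hs \phi \in \Poly^r\Lambda^d(T)$ with the sign chosen so that $\rmd^\star u = g$, and taking $b = 0$, $v = 0$, I would verify each equation: (\ref{eq:firstbd}) is trivial; (\ref{eq:secondbd}) holds because $u$ corresponds via $\hs$ to $\phi \in \Poly^r(T)$ while $b' \in U^0(T) \perp \Poly^r(T)$; and (\ref{eq:lastbd}) is integration by parts using $v'|_{\partial T} = 0$ and $\rmd^\star u = g$.

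Third, for uniqueness, the previous proposition already places any two solutions $u_1, u_2$ (with the same $g$) inside $\Poly^r\Lambda^d(T)$, so $u := u_1 - u_2$ satisfies the homogeneous form of (\ref{eq:lastbd}): $\int_T u \cdot \rmd v' = 0$ for all $v' \in Y^{d-1}(T)$. By Proposition~\ref{prop:zexact} the map $\rmd : Y^{d-1}(T) \to \{w \in Z^d_0(T) : \int_T w = 0\}$ is an isomorphism. Writing $u = \psi \omega$ with $\psi \in \Poly^r(T)$ and using the scalar identification $Z^d_0(T) \leftrightarrow U^0(T)^\perp$, the condition becomes $\int_T \psi \tilde w = 0$ for every mean-zero $\tilde w \perp U^0(T)$, equivalently $\psi - \lambda \in U^0(T)$ for some constant $\lambda$. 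Since $U^0(T) \cap \Poly^r(T) = \{0\}$ by the definition of $U^0(T)$, this forces $\psi = \lambda$, whence $u \in \bbR \omega$. The most delicate point is this last annihilator identification, which ultimately rests on the orthogonal decomposition $\Poly^{r+1}(T) = \Poly^r(T) \oplus U^0(T)$ underlying the definition of $U^0(T)$.
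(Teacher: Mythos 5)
Your proof is correct, and for the first two assertions it is essentially the paper's own argument. The paper obtains $\int_T f=0$ by writing the first equation in strong form as $\pm \hs b + \rmd v = f$ and integrating over $T$; your weak test against $u'=\omega$ is the same computation, using the same two facts ($b\in U^0(T)\perp 1$ and Stokes with the vanishing trace of $v$). For sufficiency the paper likewise takes $u\in\Poly^r\Lambda^d(T)$ with $\rmd^\star u=g$ and $b=v=0$; you merely supply the polynomial Poincar\'e-lemma detail behind the existence of such a $u$, which the paper leaves implicit. Where you genuinely diverge is uniqueness. The paper normalizes $\int_T u=0$, invokes the exactness of $Z^\bs_0(T)$ (Proposition \ref{prop:zexact}) to write $u=\rmd v'$ with $v'\in Y^{d-1}(T)$, and substitutes that $v'$ into the homogeneous last equation to get $|u|^2=0$ outright; it never needs to know that $u$ is polynomial. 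You instead import $u_1,u_2\in\Poly^r\Lambda^d(T)$ from the preceding proposition, identify $\rmd Y^{d-1}(T)$ with $(U^0(T)\oplus\bbR)^\perp$ under the scalar identification of $Z^d_0(T)$, and finish with $U^0(T)\cap\Poly^r(T)=0$. Both routes rest on the same surjectivity of $\rmd:Y^{d-1}(T)\to\{w\in Z^d_0(T):\int_T w=0\}$ --- the paper uses it to produce a primitive to test with, you use it to compute an annihilator --- so your version is valid but buys nothing extra while adding a (legitimate) dependence on the proposition $B^d(T)=\Poly^r\Lambda^d(T)$. One small omission: you never record the easy converse that constants do solve the homogeneous system, which the paper states explicitly so that ``determined up to a constant'' is sharp.
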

 \begin{proof}
The first equation can be rewritten:
 \begin{equation}
   \pm \hs b + \rmd v = f.
 \end{equation}
 Integrating this we get $\int_T f =0$. Since $f$ is constant (by the condition $\rmd^\star f =0$), we get $f = 0$.

 On the other hand, if $f =0$, we can find $u \in \Poly^r \Lambda^d(T)$ such that $\rmd^\star u = g$. Then $u$ solves the system with $b= 0$ and $v = 0$.

 We notice that the constants solve the homogeneous problem. Suppose now that $u$ solves the homogeneous problem and $\int_T u  =0$. Since $u \in Z^d(T)$ from the second equation, we have $u \in \rmd Z^{d-1}_0(T) = \rmd Y^{d-1}(T)$. Using $v' \in Y^{d-1}(T)$ such that $u = \rmd v'$ in the last equation gives $u= 0$.
 \end{proof}
 
\begin{proposition}\label{prop:pdeext}
For $k < d$  the above PDE system is wellposed and has a unique solution $u$, given boundary data for $u$ and any right sides $f\in \rmL^2 \Lambda^k(T)$ and $g \in \rmL^2 \Lambda^{k-1}(T)$ with $\rmd^\star f = 0$ and $\rmd^\star g = 0$.
\end{proposition}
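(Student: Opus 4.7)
The plan is to mirror closely the well-posedness proof for the low order case, relying on Lemmas \ref{lem:infsup} and \ref{lem:coerker}. First I lift the boundary data: pick some $\tilde u \in X^k(T)$ realizing the prescribed trace and write $u = \tilde u + \hat u$ with $\hat u \in X^k_0(T)$, moving the contribution of $\tilde u$ to the right-hand side. Then I integrate by parts the finite-dimensional constraints. For $\hat u \in X^k_0(T)$ and $a' \in U^{d-k-1}(T)$,
\begin{equation*}
\int_T \rmd \hat u \wedge a' = (-1)^{k+1} \int_T \hat u \wedge \rmd a',
\end{equation*}
so the two constraints with test pairs $(a',b') \in U^{d-k-1}(T) \times U^{d-k}(T)$ combine into $\int_T \hat u \wedge w' = 0$ for $w' = (-1)^{k+1}\rmd a' + b'$. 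By the bijectivity of $(a',b') \mapsto (-1)^{k+1}\rmd a' + b'$ onto $W^{d-k}(T)$ established just after (\ref{eq:defw}), these are equivalent to $\hat u \in Z^k_0(T)$. The same integration by parts with $u'$ in place of $a'$ collapses the Lagrange multiplier terms $\int_T \rmd u' \wedge a + \int_T u' \wedge b$ into a single term $\int_T u' \wedge w$ with $w = (-1)^{k+1}\rmd a + b \in W^{d-k}(T)$.

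This reduces the problem to a saddle-point formulation: find $\hat u \in X^k_0(T)$, $w \in W^{d-k}(T)$, $v \in Y^{k-1}(T)$ such that for all $u' \in X^k_0(T)$, $w' \in W^{d-k}(T)$, $v' \in Y^{k-1}(T)$,
\begin{align*}
\int_T \rmd \hat u \cdot \rmd u' + \int_T u' \wedge w + \int_T u' \cdot \rmd v & = F_1(u'), \\
\int_T \hat u \wedge w' & = F_2(w'), \\
\int_T \hat u \cdot \rmd v' & = F_3(v'),
\end{align*}
with bounded linear functionals $F_1,F_2,F_3$ encoding the boundary data, $f$, and $g$. The inf-sup condition of Lemma \ref{lem:infsup} applies with $E = \hs W^{d-k}(T)$; the required property $E \cap \rmd Y^{k-1}(T) = 0$ follows from $\rmd Y^{k-1}(T) \subset Z^k_0(T) \perp W^{d-k}(T)$, so the lemma preceding Lemma \ref{lem:infsup} transfers verbatim to the new $W^{d-k}(T)$. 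Coercivity on the kernel is provided directly by Lemma \ref{lem:coerker}, and the standard Brezzi theory then yields existence and continuous dependence.

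For uniqueness, the homogeneous versions of the last two equations force $\hat u \in Y^k(T)$; testing the first equation with $u' = \hat u$ annihilates the $w$ and $\rmd v$ contributions and gives $\rmd \hat u = 0$. Since $k < d$, the isomorphism $\rmd: Y^{k-1}(T) \to \ker \rmd|_{Z^k_0(T)}$ of Proposition \ref{prop:zexact}, together with $\hat u \in Y^k(T) \perp \rmd Y^{k-1}(T)$, forces $\hat u = 0$. The first equation then reads $\pm \hs w + \rmd v = 0$ in $\rmL^2$, and $\hs W^{d-k}(T) \cap \rmd Y^{k-1}(T) = 0$ yields $w = 0$ and $\rmd v = 0$, hence $v = 0$. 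The main obstacle is essentially bookkeeping: confirming that all structural ingredients of the low order proof, especially the lemma on $\hs W^{d-k} \cap \rmd Y^{k-1}$ and the bijective decomposition (\ref{eq:defw}), transfer without modification to the high order $W^{d-k}(T)$; once this is granted, the low order argument applies unchanged.
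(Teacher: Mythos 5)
Your proposal is correct and follows exactly the route the paper intends: its own proof is the single remark that ``as before'' the inf-sup condition and coercivity on the kernel hold, and you have simply carried out that reduction in detail (lifting the boundary data, integrating the constraints by parts to identify the multiplier space with $W^{d-k}(T)$ via the bijection $(a',b')\mapsto(-1)^{k+1}\rmd a'+b'$, then invoking Lemmas \ref{lem:infsup} and \ref{lem:coerker} and checking uniqueness through the exactness of $Z^\bs_0(T)$). The only cosmetic slip is writing $Z^k_0(T)\perp W^{d-k}(T)$ where the orthogonality is really with $\hs W^{d-k}(T)$; the argument itself is sound.
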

\begin{proof}
  As before (by abstract arguments involving compactness) the inf-sup condition and the coercivity on the kernel hold, for this saddlepoint problem.
\end{proof}

We define $A^k(T)$ as before from $B^k(T)$ viz.:
\begin{equation}
  A^k(T) = \{u \in X^k(T) \ : \ \forall T' \subseteq T \quad u|_{T'} \in B^k(T') \}.
\end{equation}
It is understood that $T' \subseteq T$ means that $T' \in \calT$ is a face of $T$.

We proceed to show that the spaces $A^k(T)$ constitute a FES with some very good properties.

\begin{proposition}[FES]\label{prop:highclosed}
The family of spaces $A^k(T)$ is closed under traces and the exterior derivative.
\end{proposition}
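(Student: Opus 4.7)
The plan is to handle the two closure properties separately. Closure under traces is immediate from the definition of $A^k(T)$: if $u \in A^k(T)$ and $T'' \subseteq T$, then for any face $T' \subseteq T''$ one also has $T' \subseteq T$, hence $(u|_{T''})|_{T'} = u|_{T'} \in B^k(T')$, so $u|_{T''} \in A^k(T'')$. Since pullback to a face commutes with $\rmd$, closure under the exterior derivative reduces to showing that $u \in B^k(T)$ implies $\rmd u \in B^{k+1}(T)$ on every cell $T$.

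To verify this, I would exhibit explicit witnesses for $\rmd u \in B^{k+1}(T)$: take $\tilde a = 0 \in U^{d-k-2}(T)$, $\tilde b = 0 \in U^{d-k-1}(T)$, $\tilde v = 0 \in Y^k(T)$, $\tilde f = 0$ and $\tilde g = f$, where $(f,g) \in G^k(T)$ and $(a,b,v)$ are the data witnessing $u \in B^k(T)$. The pair $(\tilde f, \tilde g) = (0, f)$ lies in $G^{k+1}(T)$ because $f \in \Poly^{r-1}\Lambda^k(T)$ with $\rmd^\star f = 0$ is exactly what the second component of $G^{k+1}(T)$ requires. With these choices, the first equation of the $B^{k+1}$-system becomes $\int_T \rmd\rmd u \cdot \rmd \tilde u' = 0$, which is trivial; the constraint $\int_T \rmd\rmd u \wedge \tilde a' = 0$ is also trivial; and the remaining constraint $\int_T \rmd u \wedge \tilde b' = 0$ for $\tilde b' \in U^{d-k-1}(T)$ matches verbatim the first finite-dimensional constraint already imposed on $u$ (with $a' = \tilde b'$).

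The crucial step is to verify the last equation, $\int_T \rmd u \cdot \rmd \tilde v' = \int_T f \cdot \tilde v'$ for every $\tilde v' \in Y^k(T)$. The plan is to substitute $u' := \tilde v'$ into the first original equation for $u$ (legitimate since $Y^k(T) \subseteq Z^k_0(T) \subseteq X^k_0(T)$) and show that each of the three middle terms vanishes. The term $\int_T \tilde v' \wedge b$ vanishes because $b \in U^{d-k}(T) \subseteq W^{d-k}(T)$ while $\tilde v' \in Z^k_0(T)$; the term $\int_T \rmd \tilde v' \wedge a$ vanishes after integration by parts, whose boundary contribution is zero since $\tilde v' \in X^k_0(T)$ and whose bulk contribution involves $\rmd a \in \rmd U^{d-k-1}(T) \subseteq W^{d-k}(T)$, again annihilated by $\tilde v' \in Z^k_0(T)$; and $\int_T \tilde v' \cdot \rmd v = 0$ because $\tilde v' \in Y^k(T)$ is by definition orthogonal to $\rmd Z^{k-1}_0(T)$, while $v \in Y^{k-1}(T) \subseteq Z^{k-1}_0(T)$. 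What remains is exactly the desired identity. The main obstacle throughout is careful index bookkeeping: one must track that the space $U^{d-k-1}(T)$ plays a double role (as the constraint space for $\rmd u$ at degree $k$ and as the $\tilde b'$-space at degree $k+1$), and that the source $f$ attached to $u$ propagates into the codifferential source $\tilde g = f$ attached to $\rmd u$, living in the correct space $\Poly^{r-1}\Lambda^k(T)$. Apart from these identifications, the argument mirrors the low-order case.
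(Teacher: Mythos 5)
Your proof is correct and follows essentially the same route as the paper: reduce to showing $u \in B^k(T) \Rightarrow \rmd u \in B^{k+1}(T)$, take $(0,f)$ as the data in $G^{k+1}(T)$, observe the first equation and the $\tilde a'$-constraint are trivial and the $\tilde b'$-constraint is the old $a'$-constraint, and verify the last equation by testing the original first equation with $u' \in Y^k(T)$, on which the Lagrange-multiplier terms vanish. You merely make explicit what the paper compresses into ``the Lagrange multipliers cancel on $Y^k(T)$'' (in particular the integration by parts showing $\int_T \rmd u' \wedge a = 0$ via $\rmd a \in W^{d-k}(T)$), which is a welcome level of detail but not a different argument.
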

\begin{proof}
  As before, given $u \in B^k(T)$ we prove that $\rmd u \in B^{k+1}(T)$. Given $u'\in Y^k(T)$ we have, since the Lagrange multipliers cancel on $Y^k(T)$:
  \begin{align}
    \int_T \rmd u \cdot \rmd u' = \int_T f \cdot u'.
  \end{align}
  The two finite dimensional constraints required on $\rmd u$ are also satisfied.
\end{proof}

 \begin{proposition}[Approximation]
The space $ A^k(T)$ contains $\Poly^r \Lambda^k(T)$.
 \end{proposition}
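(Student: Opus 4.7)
The plan is to show directly that every $u \in \Poly^r \Lambda^k(T)$ belongs to $B^k(T)$ by exhibiting explicit data $(f,g)$ and vanishing Lagrange multipliers; the extension to $A^k(T)$ then follows by recursion over faces, since traces of polynomials are polynomials of the same degree on the faces.

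For the PDE characterizing $B^k(T)$, I will take the natural candidates $f := \rmd^\star \rmd u$ and $g := \rmd^\star u$, together with $a = 0$, $b = 0$, $v = 0$. Both $f$ and $g$ are polynomial forms of degree at most $r-1$, and $\rmd^\star f = \rmd^\star g = 0$ since $(\rmd^\star)^2 = 0$, so $(f,g) \in G^k(T)$. With these choices, the first PDE reduces to $\int_T \rmd u \cdot \rmd u' = \int_T f \cdot u'$ for all $u' \in X^k_0(T)$, which is just integration by parts (the boundary term vanishes because $u'|_{\partial T} = 0$). The third equation reduces to $\int_T u \cdot \rmd v' = \int_T g \cdot v'$ for $v' \in Y^{k-1}(T) \subseteq X^{k-1}_0(T)$, again by integration by parts.

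The key step is to verify the two finite-dimensional constraints $\int_T \rmd u \wedge a' = 0$ for all $a' \in U^{d-k-1}(T)$ and $\int_T u \wedge b' = 0$ for all $b' \in U^{d-k}(T)$. I will use the Hodge star identity $\alpha \wedge \hs \beta = (\alpha \cdot \beta)\omega$ together with $\hs\hs = \pm \id$ to rewrite $\int_T u \wedge b' = \pm \int_T b' \cdot \hs u$. Since $\hs$ is pointwise constant in orthonormal coordinates, it maps $\Poly^r \Lambda^k(T)$ isomorphically onto $\Poly^r \Lambda^{d-k}(T)$, so $\hs u \in \Poly^r \Lambda^{d-k}(T)$. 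The defining property of $U^{d-k}(T)$ asserts precisely that $b' \perp \Poly^r \Lambda^{d-k}(T)$ in $\rmL^2$, so the pairing vanishes. The same argument applied to $\rmd u \in \Poly^{r-1}\Lambda^{k+1}(T) \subseteq \Poly^r \Lambda^{k+1}(T)$ handles the constraint involving $a'$.

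Finally, for each face $T' \subseteq T$, the trace $u|_{T'}$ lies in $\Poly^r \Lambda^k(T')$, so an induction on $\dim T'$ (the $0$-dimensional case being vacuous) shows $u|_{T'} \in B^k(T')$, hence $u \in A^k(T)$. The only step requiring care is the Hodge-star rewriting in the constraints; the remainder of the argument is integration by parts and routine degree bookkeeping.
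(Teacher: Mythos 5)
Your proof is correct and follows the same route as the paper: take $f = \rmd^\star \rmd u$, $g = \rmd^\star u$ and zero Lagrange multipliers, check the variational equations by integration by parts, and pass to faces since traces of polynomials are polynomials. The paper leaves the finite-dimensional constraints implicit, whereas you verify them explicitly via the Hodge star and the defining orthogonality $U^{d-k}(T) \perp \Poly^r\Lambda^{d-k}(T)$ --- a welcome addition, but not a different argument.
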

 \begin{proof}
   Given $u \in \Poly^r \Lambda^k(T)$ we define $f = \rmd^\star \rmd u$ and $g = \rmd^\star u$.
   Then the PDE system is solved with zero Lagrange multipliers.

   This holds also on the faces of $T$.
 \end{proof}

 \begin{proposition}
For $k < d$ the map $G^k(T) \to A^k_0(T)$ obtained from solving (\ref{eq:defApde}) with homogeneous boundary conditions for $u$ is injective.
 \end{proposition}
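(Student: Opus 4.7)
The plan is to assume that some $(f,g) \in G^k(T)$ solves \eqref{eq:defApde} with $u=0$ and zero boundary data, and to deduce $(f,g) = (0,0)$. With $u=0$, the constraint row of \eqref{eq:defApde} is vacuous; the first equation becomes an identity in $u' \in X^k_0(T)$ relating $a,b,v,f$, while the third reduces to $\int_T g \cdot v' = 0$ for all $v' \in Y^{k-1}(T)$.

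\emph{Step 1: $f = 0$.} Integrating the $a$-term by parts (using $u'|_{\partial T}=0$) and invoking density of $X^k_0(T)$ in $\rmL^2\Lambda^k(T)$, the first equation yields the pointwise identity
\begin{equation*}
\rmd v \;=\; f \,+\, \alpha_1\, \hs \rmd a \,+\, \alpha_2\, \hs b \quad \text{in } \rmL^2\Lambda^k(T),
\end{equation*}
with suitable signs $\alpha_i$. Taking the $\rmL^2$ inner product with $\rmd v$: the $f$-term integrates by parts to zero using $\rmd^\star f=0$ and $v|_{\partial T}=0$; the $\hs \rmd a$-term reduces to $\pm\int_T \rmd v \wedge \rmd a = \pm\int_{\partial T} v \wedge \rmd a = 0$; and the $\hs b$-term, after integration by parts, becomes a wedge pairing of $v$ with $\rmd b \in \rmd U^{d-k}(T) \subseteq W^{d-k+1}(T)$, which vanishes since $v \in Z^{k-1}_0(T)$. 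Hence $\rmd v = 0$, and since $k-1<d$, Proposition~\ref{prop:zexact} together with the defining condition $Y^{k-1}\perp \rmd Z^{k-2}_0$ forces $v=0$. Applying $\rmd^\star$ to the remaining identity $f = \alpha_1 \hs \rmd a + \alpha_2 \hs b$ and using $\rmd^\star \hs = \pm \hs \rmd$ and $\rmd^2=0$ gives $\hs \rmd b = 0$, so $\rmd b = 0$. The preliminary observation from the proof of \eqref{eq:pmpmw}---if $u \in U^j(T)$ with $\rmd u \in \Pminus^r \Lambda^{j+1}(T)$, then $u=0$---applied to $b$ yields $b=0$. Then $f = \alpha_1 \hs \rmd a$, so $f \in \Poly^{r-1}\Lambda^k(T)$ forces $\rmd a \in \Poly^{r-1}\Lambda^{d-k}(T) \subseteq \Pminus^r \Lambda^{d-k}(T)$, and a second application of the observation gives $a=0$, whence $f=0$.

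\emph{Step 2: $g = 0$.} From $\rmd^\star g = 0$ and $\phi|_{\partial T}=0$ for $\phi \in X^{k-2}_0(T)$, integration by parts yields $g \perp_{\rmL^2} \rmd X^{k-2}_0(T)$, and a fortiori $g \perp_{\rmL^2} \rmd Z^{k-2}_0(T)$. Combined with $g \perp_{\rmL^2} Y^{k-1}(T)$ and the fact that, by definition, $Y^{k-1}(T)$ is the $\rmL^2$-orthogonal complement of $\rmd Z^{k-2}_0(T)$ within $Z^{k-1}_0(T)$, a standard closure argument gives $g \perp_{\rmL^2} Z^{k-1}_0(T)$. The continuous functional $u \mapsto \langle g,u\rangle_{\rmL^2}$ on $X^{k-1}_0(T)$ therefore vanishes on $Z^{k-1}_0(T)$; by the short exact sequence $0 \to Z^{k-1}_0 \to X^{k-1}_0 \to (W^{d-k+1})^\star \to 0$ of Proposition~\ref{prop:zexact}, it is represented by wedge pairing with some $w \in W^{d-k+1}(T)$, and density of $X^{k-1}_0$ in $\rmL^2\Lambda^{k-1}$ yields $g = \pm \hs w$. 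Writing $w = \rmd c_1 + c_2$ via the bijection $U^{d-k}(T) \times U^{d-k+1}(T) \to W^{d-k+1}(T)$, the condition $\rmd^\star g = 0$ becomes $\hs \rmd c_2 = 0$, so $c_2=0$ by the preliminary observation. Finally $g = \pm \hs \rmd c_1 \in \Poly^{r-1}\Lambda^{k-1}(T)$ forces $\rmd c_1 \in \Poly^{r-1}\Lambda^{d-k+1}(T) \subseteq \Pminus^r\Lambda^{d-k+1}(T)$, and a last application of the observation gives $c_1=0$, whence $g=0$.

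The main technical hurdle I anticipate is the closure/density step in Step 2: passing from orthogonality against $Y^{k-1}+\rmd Z^{k-2}_0$ to orthogonality against all of $Z^{k-1}_0$, and then representing the resulting $\rmL^2$-functional as a wedge pairing against $W^{d-k+1}(T)$ through the SES of Proposition~\ref{prop:zexact}. The Hodge-star sign bookkeeping in both steps also requires explicit care, though it does not alter the structure of the argument.
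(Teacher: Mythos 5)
Your proof is correct, and its overall skeleton is the one the paper uses: show that $f$ and $g$ are $\rmL^2$-orthogonal to the relevant spaces, conclude by density that they lie in $\hs W^{d-k}(T)$ and $\hs W^{d-k+1}(T)$ respectively, and then use the transversality of $\hs W^{\bs}(T)$ with $\Poly^{r-1}\Lambda^{\bs}(T)$ (which is the directness of the sum \eqref{eq:pmpmw}) to get zero. Your Step~2 is essentially verbatim what the paper does (the paper handles $f$ this way and then says the argument for $g$ is identical). Where you genuinely diverge is Step~1: the paper never solves for the Lagrange multipliers at all --- it tests the first equation with $u' \in Y^k(T)$ and with $u' = \rmd v'$, $v' \in Y^{k-1}(T)$, so that the $w$- and $\rmd v$-terms drop out by the very definitions of $Z^k_0(T)$ and $Y^k(T)$, giving $f \perp Z^k_0(T)$ directly and then proceeding exactly as in your Step~2. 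You instead extract the pointwise identity $\rmd v - f \in \hs W^{d-k}(T)$, kill $v$ by an energy argument (pairing with $\rmd v$ and integrating by parts against $\rmd a$ and $b$), and then dismantle $\rmd a + b$ term by term with the preliminary observation from the proof of \eqref{eq:pmpmw}. Both are sound; the paper's version is shorter and avoids the sign bookkeeping and the separate verification that $v=0$, while yours has the minor virtue of identifying all the multipliers explicitly (showing $a=b=0$ and $v=0$, not just $f=0$). The two technical points you flag --- closedness of $\rmd Z^{k-2}_0(T)$ in $\rmL^2$ so that $Z^{k-1}_0(T) = Y^{k-1}(T) \oplus \rmd Z^{k-2}_0(T)$, and the density step identifying the annihilator of $Z^{k-1}_0(T)$ in $\rmL^2$ with $\hs W^{d-k+1}(T)$ --- are used by the paper with the same brevity, so they are not gaps specific to your argument, though they do deserve the care you anticipate.
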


 \begin{proof}
   Suppose that the solution to (\ref{eq:defApde}) with $u|_{\partial T} = 0$ is $u = 0$. We then get:
    \begin{align}
  \int_T u' \wedge w + \int_T u' \cdot \rmd v & = \int_T f \cdot u',\\
  0 & = \int_T g \cdot v',
\end{align} 
for all $u' \in X^k_0(T)$, for some $w\in W^{d-k}(T)$, all $w'\in W^{d-k}(T)$, some $v \in Y^{k-1}(T)$ and all $v' \in Y^{k-1}(T)$.

With $u' \in Y^k(T)$ we get:
\begin{equation}
  \int_T f \cdot u' =0.
\end{equation}
This also holds for $u' = \rmd v'$ for $v' \in Y^{k-1}(T)$. We deduce that for all $u' \in Z^k_0(T)$:
\begin{equation}
  \int_T f \cdot u' =0.
\end{equation}
By a density argument with respect to $\rmL^2$ norm, this can be extended to any $u' \in \rmL^2 \Lambda^k(T)$ such that:
\begin{equation}
\forall w' \in W^{d-k}(T) \quad  \int_T u' \wedge w' = 0, \textrm{ i.e. }\ u' \perp \hs W^{d-k}(T).
\end{equation}
This implies that $f \in \hs W^{d-k}(T)$.  But in view of (\ref{eq:pmpmw}) we have $\hs W^{d-k}(T) \cap \Poly^{r-1} \Lambda^k(T) = 0$, so we get $f =0$. 

The proof that $g = 0$ is identical.
 \end{proof}

\begin{proposition}[Degrees of freedom]\label{prop:hdof}
  Any element $u \in A^k(T)$ is uniquely determined by the following degrees of freedom, where $T'$ are faces of $T$:
\begin{equation}
  \int_{T'} u \wedge e, \quad e \in \Pminus^r \Lambda^{d'-k}(T'),\ d' = \dim T'.
\end{equation}

\end{proposition}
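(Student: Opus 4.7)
My plan is induction on $d = \dim T$. The base case $d = k$ is immediate: we already established $A^k(T) = B^k(T) = \Poly^r\Lambda^d(T)$, and the pairing $(u,e) \mapsto \int_T u \wedge e$ between $\Poly^r\Lambda^d(T)$ and $\Pminus^r\Lambda^0(T) = \Poly^r(T)$ is non-degenerate. For the inductive step ($d > k$), the recursive definition of $A^k$ gives $u|_{T'} \in A^k(T')$ for every proper face $T' \subsetneq T$, so the induction hypothesis recovers $u|_{\partial T}$ from the DOFs on proper faces. It then suffices to show that the interior DOFs $u \mapsto (e \mapsto \int_T u \wedge e)$ furnish an isomorphism from the bubble space $A^k_0(T) = \{u \in A^k(T) : u|_{\partial T} = 0\}$ onto the dual of $\Pminus^r\Lambda^{d-k}(T)$.

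For this bubble step I would combine Proposition \ref{prop:pdeext} with the injectivity statement preceding the present proposition. The former gives wellposedness of the defining PDE, so every $u \in A^k_0(T)$ comes from some $(f,g) \in G^k(T)$; the latter says this assignment is injective, so the two finite-dimensional spaces $G^k(T)$ and $A^k_0(T)$ are isomorphic. In particular $\dim A^k_0(T) = \dim G^k(T)$, and a separate polynomial computation, based on Hodge duality $\rmd^\star = \pm \hs \rmd \hs$, exactness of the polynomial de Rham complex on $T$ in positive degree, and the decomposition (\ref{eq:pmpmw}) applied at the appropriate levels, identifies the common value with $\dim \Pminus^r\Lambda^{d-k}(T)$.

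It therefore remains to verify injectivity of the DOF map on $A^k_0(T)$. Assume $u \in A^k_0(T)$ satisfies $\int_T u \wedge e = 0$ for all $e \in \Pminus^r\Lambda^{d-k}(T)$, and let $(f, g) \in G^k(T)$ be the associated PDE data. Testing the first PDE equation against $u' \in Y^k(T)$ kills all Lagrange-multiplier contributions and leaves $\int_T \rmd u \cdot \rmd u' = \int_T f \cdot u'$; the third equation supplies the analogous statement about $g$. Writing $\Pminus^r\Lambda^{d-k}(T) = \Poly^{r-1}\Lambda^{d-k}(T) + \kappa_T \Poly^{r-1}\Lambda^{d-k+1}(T)$ and integrating by parts against $u \in X^k_0(T)$ (no boundary terms), the moment hypothesis converts into orthogonality of $f$ and $g$ against sufficiently large polynomial subspaces; combined with $\rmd^\star f = \rmd^\star g = 0$, this forces $f = g = 0$, and the preceding injectivity proposition then yields $u = 0$.

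I expect the main obstacle to lie in this final step: carefully matching the two summands of the trimmed-polynomial decomposition to the test spaces generated by the PDE, so that, after integration by parts, the orthogonality conditions obtained on $f$ and $g$ are, in conjunction with the divergence-free constraints defining $G^k(T)$, rich enough to annihilate both components without any parasitic loss of test space.
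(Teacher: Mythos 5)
Your overall skeleton matches the paper's: reduce to the bubble space $A^k_0(T)$, establish $\dim A^k_0(T)=\dim G^k(T)=\dim\Pminus^r\Lambda^{d-k}(T)$ via the injectivity of $(f,g)\mapsto u$ together with a dimension identity (the paper simply cites \cite[Equation (2.28)]{ChrGil16} rather than recomputing it), and then prove injectivity of the moment pairing (\ref{eq:dofa}) on $A^k_0(T)$. The base case $d=k$ and the reduction to the bubble space are fine, as is the final appeal to the extension property of Proposition \ref{prop:pdeext} for assembling the face-by-face data.

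The gap is in the step you yourself flag as the main obstacle. Your plan is to deduce $f=g=0$ from the vanishing of the moments of $u$ and only then conclude $u=0$; as stated this is circular for $g$ and mischaracterizes the mechanism for $f$. The moment hypothesis, upgraded via (\ref{eq:pmpmw}) to $\int_T u\wedge e=0$ for all $e\in\Pminus^{r+1}\Lambda^{d-k}(T)$, gives you directly only the single scalar relation $\int_T f\cdot u=0$ (take $e=\hs f$); it does not make $f$ orthogonal to any large polynomial test space. What the paper does is test the first equation with $u'=u$ to obtain $\int_T|\rmd u|^2=\int_T f\cdot u=0$, hence $\rmd u=0$. (From there $f=0$ would follow by the $\rmL^2$-density argument over the infinite-dimensional space $Z^k_0(T)$, not by polynomial orthogonality --- but the proof does not even need $f=0$.) For $g$ the situation is worse: the only handle on $g$ is $\int_T u\cdot\rmd v'=\int_T g\cdot v'$ for $v'\in Z^{k-1}_0(T)$, and the only way to exploit it is to produce a primitive of $u$: since $\rmd u=0$ and $u$ annihilates $\Pminus^{r+1}\Lambda^{d-k}(T)$, the exactness of the auxiliary complex $\tilde Z^\bs_0(T)$ (defined by moments against $\Pminus^{r+1}$, and exact also at the top index) yields $t\in\tilde Z^{k-1}_0(T)$ with $\rmd t=u$, and testing with $v'=t$ gives $\int_T|\rmd t|^2=\int_T t\wedge\hs g=0$ because $\hs g\in\Pminus^{r+1}\Lambda^{d-k+1}(T)$. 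This proves $u=0$ outright; one never obtains $g=0$ before $u=0$, so the order of deductions in your sketch cannot be realized. Two smaller corrections: concluding $u=0$ from $f=g=0$ would invoke the uniqueness part of Proposition \ref{prop:pdeext}, not the injectivity of $(f,g)\mapsto u$ (that proposition goes in the opposite direction); and for $0<k$ one must first extend the test space of the third equation from $Y^{k-1}(T)$ to all of $Z^{k-1}_0(T)$, which requires the short argument with closed forms $v'=\rmd v''$ given in the paper.
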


\begin{proof}
  The case $k = d$ is immediate. We proceed with the case $k <d$. Recall the notation that $A^k_0(T) = A^k(T) \cap X^k_0(T)$. We prove  that the following bilinear form on $A^k_0(T) \times \Pminus^r \Lambda^{d-k}(T)$ is invertible, in several steps:
  \begin{equation}\label{eq:dofa}
    (u, e) \mapsto \int_T u \wedge e.
  \end{equation}

\tpoint  For $u \in A^k_0(T)$ we obtain $u \in Z^k_0(T)$ from the finite dimensional constraints, and, for all $u' \in Z^k_0(T)$, and all $v' \in Y^{k-1}(T)$:
  \begin{align}
    \int_T \rmd u \cdot \rmd u' + \int_T u' \cdot \rmd v & = \int_T f \cdot u',\\
    \int_T  u \cdot \rmd v' & = \int_T g \cdot v'.\label{eq:ytest} 
    \end{align}
  We use $u' = \rmd v$ and use $\rmd^\star f =0$ to deduce $\rmd v = 0$ so $v= 0$.

 \tpoint  Suppose now that for all $e \in \Pminus^r \Lambda^{d-k}(T)$:
  \begin{equation}
    \int_T u \wedge e = 0.
  \end{equation}
Using (\ref{eq:pmpmw}) we then get, actually, that for all $e \in \Pminus^{r+1} \Lambda^{d-k}(T)$:
\begin{equation}\label{eq:uteste}
    \int_T u \wedge e = 0.
  \end{equation}
In particular:
\begin{equation}
  \int_T f \cdot u = 0.
\end{equation}
From the choice $u' = u$ in the first equation, this gives $\rmd u = 0$. For the case $k= 0$ we then deduce $u = 0$.

\tpoint For $0< k <d$ we proceed as follows.

If, in addition $ k> 1$, we remark that for any $v' \in Z^{k-1}_0(T)$ such that $\rmd v' = 0$ there is $v'' \in Z^{k-2}_0(T)$ such that $v' = \rmd v''$. For such $v'$ we have:
\begin{align}
  \int_T u \cdot \rmd v' & = 0,\\
  \int_T g \cdot v' & = \int_T g \cdot \rmd v'' = 0.
\end{align}
Combining this with (\ref{eq:ytest}) we conclude that for all $v' \in Z^{k-1}_0(T)$:
\begin{equation}\label{eq:ztest}
\int_T  u \cdot \rmd v'  = \int_T g \cdot v'. 
\end{equation}
If $ k =1 $ there is no $v' \in Z^{k-1}_0(T)$ such that $\rmd v' =0$, so the above conclusion also holds.

We introduce $\tilde Z^k_0(T)$ defined by:
\begin{equation}
  \tilde Z^k_0(T) = \{ u \in X^k_0(T) \ : \ \forall e \in \Pminus^{r+1} \Lambda^{d-k}(T) \quad \int_T u \wedge e = 0\}.
\end{equation}
We have $\tilde Z^k_0(T) \subseteq Z^k_0(T)$. The complex $\tilde Z^\bs_0(T)$ is exact, by the snake lemma and the exactness properties of $\Pminus^{r+1} \Lambda^{\bs}(T)$. It is a variant of Proposition \ref{prop:zexact} but now exactness of $\tilde Z^\bs_0(T)$ holds also at index $k= d$. 

Since $\rmd u = 0$ and (\ref{eq:uteste}) holds we deduce that there is $t \in \tilde Z^{k-1}_0(T)$ such that $\rmd t = u$. This entails that for all $e \in \Pminus^{r+1} \Lambda^{d-k+1}(T)$:
\begin{equation}
    \int_T t \wedge e = 0.
  \end{equation}
Then we have, setting $u = \rmd t$ and $v' = t$ in (\ref{eq:ztest}):
\begin{equation}
\int_T  \rmd t \cdot \rmd t  = \int_T g \cdot t = \int_T t \wedge \hs g = 0. 
\end{equation}
This gives $u = 0$.

We have proved that the bilinear form (\ref{eq:dofa}) defines an injective map:
\begin{equation}
  A^k_0(T) \to  \Pminus^r \Lambda^{d-k}(T)^\star.
\end{equation}

\tpoint We proceed to prove that the two spaces above have the same dimension.

For $k < d$, we have from \cite[Equation (2.28)]{ChrGil16}:
\begin{equation}
  \dim G^k(T) = \dim \Pminus^r \Lambda^{d-k}(T).
  \end{equation}
Recall also the injectivity of the map $G^k(T) \to A^k_0(T)$, proved above. We get:
\begin{equation}
\dim G^k(T) = \dim A^k_0(T) = \dim  \Pminus^r \Lambda^{d-k}(T).
\end{equation}

For $k = d$ we have more simply:
\begin{equation}
  \dim A^d(T) = \dim \Poly^r \Lambda^d(T) = \dim \Pminus^r \Lambda^0(T).
\end{equation}


It follows that the bilinear form (\ref{eq:dofa}) is invertible.

\tpoint Now  it follows from Proposition \ref{prop:pdeext} that the FES \emph{admits extensions} in the sense of \cite[Definition 2.3]{ChrRap16} (or \cite[Definition page 75]{ChrMunOwr11}). The unisolvence of the degrees of freedom follows then from a general induction argument given for FES in \cite[Proposition 2.5]{ChrRap16} (or \cite[Proposition 5.35]{ChrMunOwr11}).
\end{proof}

\begin{proposition}[Computation of moments]\label{prop:hmom}
  For elements $u \in A^k(T)$ its moments against $\Poly^r\Lambda^{d-k}(T)$ can be computed inductively from the degrees of freedom.
  \end{proposition}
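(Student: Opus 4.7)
I would adapt the strategy of Proposition \ref{prop:momconst}, proceeding by induction on the dimension $d' = \dim T'$ of faces $T' \subseteq T$ with $d' \geq k$. At each step, the target polynomial space is split into a piece handled by a degree of freedom and a piece handled by one of the finite dimensional constraints in the PDE definition of $B^k(T')$. It is slightly more convenient to prove the stronger inductive statement that moments of $u|_{T'}$ against $\Pminus^{r+1}\Lambda^{d'-k}(T')$ (which contains $\Poly^r\Lambda^{d'-k}(T')$) are all computable from the degrees of freedom; specializing to $T' = T$ then yields the proposition.

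The key algebraic ingredient is the decomposition (\ref{eq:pmpmw}), combined with $W^{d'-k}(T') = \rmd U^{d'-k-1}(T') \oplus U^{d'-k}(T')$, which together give
\begin{equation}
\Pminus^{r+1}\Lambda^{d'-k}(T') = \Pminus^r\Lambda^{d'-k}(T') \oplus \rmd U^{d'-k-1}(T') \oplus U^{d'-k}(T').
\end{equation}
Integrals of $u|_{T'}$ against $\Pminus^r\Lambda^{d'-k}(T')$ are among the degrees of freedom on $T'$. Integrals against $U^{d'-k}(T')$ vanish identically by the constraint $\int_{T'} u \wedge b' = 0$ built into $B^k(T')$. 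For $a \in U^{d'-k-1}(T')$, integration by parts combined with the other constraint $\int_{T'} \rmd u \wedge a = 0$ gives
\begin{equation}
\int_{T'} u \wedge \rmd a = (-1)^k \int_{\partial T'} u \wedge a,
\end{equation}
which decomposes as a sum over codimension-one faces $T''$ of $T'$.

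The pullback of $a \in \Pminus^{r+1}\Lambda^{d'-k-1}(T')$ to such a face $T''$ lies in $\Pminus^{r+1}\Lambda^{d''-k}(T'')$ with $d'' = d' - 1$, so each boundary contribution is covered by the inductive hypothesis. The base case $d' = k$ requires no integration by parts: the space $U^{-1}$ is trivial, so only the $U^0$ piece of $W^0(T')$ is active, the corresponding moments vanish by the above constraint, while moments against $\Pminus^r\Lambda^0(T') = \Poly^r(T')$ are direct degrees of freedom.

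The main obstacle is bookkeeping rather than a conceptual difficulty: one must verify that the two finite dimensional constraints in the PDE for $B^k(T')$ are exactly aligned with the two summands of $W^{d'-k}(T')$, and that phrasing the inductive hypothesis in terms of $\Pminus^{r+1}$ (rather than $\Poly^r$) is precisely what makes the trace of the Koszul-type multiplier $a$ land back in the inductive class on the lower-dimensional face, closing the recursion.
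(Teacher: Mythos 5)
Your proposal is correct, and it reorganizes the argument in a way that is genuinely different from, and arguably cleaner than, the paper's proof. The paper starts from $w \in \Poly^r\Lambda^{d-k}(T)$, applies the Koszul homotopy formula $(d-k+r)w = \rmd\kappa_T w + \kappa_T\rmd w - Q_{r-1}w$, and after integrating by parts is left with an interior term $\int_T \rmd u \wedge \kappa_T w$ that it must handle by a separate ``intermediate step'': first establishing computability of moments for closed fields, then applying that step to $\rmd u$ (whose degrees of freedom are computable from those of $u$), with $P_r$ projections and a Koszul recentering $(\kappa_T - \kappa_{T'})$ to push the boundary terms back into the inductive class. You avoid that two-stage structure entirely by (a) strengthening the inductive claim to moments against $\Pminus^{r+1}\Lambda^{d'-k}(T')$ and (b) choosing the multiplier $a$ to lie exactly in $U^{d'-k-1}(T')$, so that the interior term $\int_{T'}\rmd u\wedge a$ is killed outright by the constraint on $\rmd u$, while the $U^{d'-k}(T')$ summand is killed by the constraint on $u$ and the $\Pminus^r$ summand is a degree of freedom. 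The one fact you correctly flag as load-bearing --- that the trace of $a \in \Pminus^{r+1}\Lambda^{d'-k-1}(T')$ on a codimension-one face lands in $\Pminus^{r+1}\Lambda^{d''-k}(T'')$ --- does hold on flat cells: $\Pminus$ is independent of the choice of Koszul origin, and recentering from $b_{T'}$ to $b_{T''}$ is contraction with a constant vector, which only adds a $\Poly^r$ term; this is exactly the computation the paper performs by hand with $(\kappa_T-\kappa_{T'})w$ and $P_r\kappa_{T'}w$, so it would be worth stating explicitly rather than leaving it as ``bookkeeping.'' With that verification spelled out, your single induction on face dimension delivers the same conclusion (and slightly more, since $\Poly^r\Lambda^{d-k}(T) \subseteq \Pminus^{r+1}\Lambda^{d-k}(T)$) with fewer moving parts.
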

\begin{proof}
\tpoint Let $u \in A^k(T)$.  Suppose that $w \in \Poly^r\Lambda^{d-k}(T)$ and that we want to compute:
  \begin{equation}
    \int_T u \wedge w.
  \end{equation}
  We write as before:
  \begin{equation}
    (d- k +r) w = \rmd \kappa_T w + \kappa_T \rmd w - Q_{r-1} w.
  \end{equation}
  On the right side the two last terms are degrees of freedom, so we concentrate on the first. We have:
  \begin{equation}\label{eq:uwk}
    \int_T u \wedge \rmd \kappa_T w = \pm \int_{\partial T} u \wedge \kappa_T w \pm \int_T \rmd u \wedge \kappa_T w.
  \end{equation}
  
  \tpoint As an intermediate step, we now treat the case where $\rmd u = 0$. Then there is only the first term on the right side. It reduces to terms of the following form, where $T'$ is a codimension one face of $T$:
  \begin{equation}\label{eq:comput}
    \int_{T'} u \wedge (\kappa_T - \kappa_{T'}) w + \int_{T'} u \wedge (\kappa_{T'} w - P_r \kappa_{T'} w) +  \int_{T'} u \wedge P_r \kappa_{T'} w.
  \end{equation}
  Here $P_r$ is $L^2$ projection onto polynomial differential forms of polynomial degree $r$. Since $P_r$ is applied to polynomial forms (or degree at most $r+1$) on a polytope, it is \emph{computable}.
  
  In (\ref{eq:comput}) the first and last terms are moments against $\Poly^r\Lambda^{d-1-k}(T')$, so amena\-ble to an induction step. The middle term is zero, since it is a moment against $U^{d-1}(T')$ representing a constraint.

  This settles the intermediate step: we can compute the moments of $u$ against polynomials of degree $r$ in the case $\rmd u = 0$, from the dofs of $u$, by induction on dimension.

  \tpoint In the general case we remark that $\rmd \rmd u = 0$ and that the degrees of freedom of $\rmd u$ are computable from those of $u$. Applying the intermediate step to $\rmd u$, its moments against polynomials of degree $r$ can be computed. Going back to (\ref{eq:uwk}) and looking at the last term, we write:
  \begin{equation}
    \int_T \rmd u \wedge \kappa_T w = \int_T \rmd u \wedge (\kappa_T w - P_r \kappa_T w) + \int_T \rmd u \wedge P_r \kappa_T w.
  \end{equation}
  On the right side the first term is zero by construction, since  $\kappa_T w - P_r \kappa_T w\in U^{d-k-1}$. The last term is computed by the above intermediate step.

  In (\ref{eq:uwk}) we are left with the first term on the right side, for which (\ref{eq:comput}) still applies. Again, this provides the beginnings of an induction procedure. 
\end{proof}

\begin{remark} We now relate some of the above results to the vocabulary of the FES framework.

  That the family of spaces $A^k(T)$ is closed under traces and the exterior derivative, as proved in Proposition \ref{prop:highclosed}, is the definition of a FES. However, not all FES produce global spaces that are suitable for Galerkin methods. Two additional conditions are important.

  The first condition, referred to as \emph{softness}, has several equivalent formulations, see especially \cite[Propositions 2.5 and 2.6]{ChrRap16}. One requires that discrete fields on the boundary of a cell, that satisfy a compatibility condition with respect to faces inside the boundary, can be extended to a discrete field on the cell. Here, this follows from Proposition \ref{prop:pdeext}. This condition turns out to be equivalent to the existence of unisolvent degrees of freedom which we have also proved, in Proposition \ref{prop:hdof}.

  The second condition, referred to as local exactness, is that for each cell $T$, the sequence $A^\bs(T)$ resolves $\bbR$. Here also, many stategies are possible. Given that softness holds, one can prove instead that the sequences $A^\bs_0(T)$ are exact, except at index $\dim T$ where the cohomology is determined by the integral, see \cite[Proposition 2.3]{ChrRap16}. This seems easier in this particular case. But even easier is to deduce the sequence exactness from a property of the degrees of freedom as in \cite[Proposition 2.7 and 2.8]{ChrRap16}. From that perspective, if $u\in A^k(T)$ and we let $v$ be a degree of freedom acting on $\rmd u \in A^{k+1}(T)$, we write:
  \begin{equation}
    \int_T \rmd u \wedge v = \int_{\partial T} u \wedge v - (-1)^k \int_T u \wedge \rmd v.
  \end{equation}
  The right side only involves degrees of freedom acting on $u$. This identity is used to assert that the discrete exterior derivative is computable in the chosen degrees of freedom and that the interpolator associated with the degrees of freedom commutes with the exterior derivative. As already indicated, a third consequence is that local exactness holds.

  The two algebraic conditions, softness and local exactness, ensure that we have a \emph{compatible} FES. When a compatible FES contains polynomials of given degree, is stable with respect to small perturbations of the mesh, and is invariant under scalings, it behaves like standard finite element spaces, such as the spaces $\Pminus^r \Lambda^k(T)$ on simplices.
\end{remark}

\section{Some schemes and their analysis\label{sec:algo}}
In this section we sketch some numerical methods and elements of their analysis, relative to a choice of polynomial degree $r$. The choice $r=0$ is the lowest order case described in \S \ref{sec:low}, whereas the case $r\geq 1$ was described in \S \ref{sec:high}.

Given $u \in A^k(T)$ and based on the computation of moments against forms which are polynomials of degree $r$ obtained in Propositions \ref{prop:momconst} and \ref{prop:hmom}, one is able to inductively compute $\gamma_T u$, which is the $\rmL^2(T)$ projection of $u$ on $\Poly^r \Lambda^k (T)$.

Following VEM practice \cite{BeiBreMarRus23} one also introduces a symmetric bilinear form $s_h$ on $A^k(T)$, defined from the degrees of freedom, which is bounded and coercive in $\rmL^2$, uniformly with respect to $T$. It should scale like the $\rmL^2$ inner product (under scaling maps $x \to h_T x$, where $h_T$ is the diameter of $T$) but no consistency is required. It can be block diagonal in the degrees of freedom.

One then defines the discrete $\rmL^2$-product on $A^k(T)$ by:
\begin{equation}\label{eq:l2h}
  \langle u, u' \rangle_h = \int_T \gamma_T u \cdot \gamma_T u' + s_h( u - \gamma_T u, u' -\gamma_T u').
\end{equation}
The second term on the right side is referred to as the stabilizing term.

The crucial points are that this discrete inner product is computable, coercive and consistent. It is consistent in the sense that if at least one of $u, u'$ is in $\Poly^r \Lambda^k(T)$, it gives the exact value:
\begin{equation}\label{eq:l2exact}
\langle u , u' \rangle_h = \int_T u \cdot u'.
\end{equation}
Remark also that the exterior derivative $\rmd: A^k(T)\to A^{k+1}(T)$ is computable from the degrees of freedom on $A^k(T)$ to those on $A^{k+1}(T)$. These two properties, which are here stated in their local forms, pass to a global form. From there, numerical methods for various PDEs can be defined. Property (\ref{eq:l2exact}) make these methods amenable to an analysis based on Strang's first lemma \cite[Theorem 26.1 p. 192]{Cia91}. Such studies have been carried out in the context of VEM, as illustrated with a complete model problem in \cite[\S 3]{BeiBreMarRus23}. See also \cite{ChrHal15}.

As an example of PDE on differential forms that features many generic difficulties, we consider the Hodge-Laplace problem \cite{ArnFalWin10}. Given a $k$-form $f$ one seeks a $k$-form $u$ such that:
\begin{equation}
  - \Delta u = f, \ \textrm{with } -\Delta = \rmd^\star \rmd + \rmd\, \rmd^\star,
\end{equation}
with, say, natural boundary conditions. We suppose the domain is contractible for simplicity. One introduces the auxiliary variable $p = \rmd^\star u$ which is a $(k-1)$-form. We let $X^k$ and $X^{k-1}$ be the global spaces of $k$- and $(k-1)$-forms, which are $\rmL^2$ with exterior derivative in $\rmL^2$. The mixed formulation in terms of $(u, p) \in X^k \times X^{k-1}$ is:
\begin{equation}
\mixedright{u^\test}{X^k}{\int \rmd u \cdot \rmd u^\test + \int u^\test \cdot \rmd p}{\int f \cdot u^\test, }{p^\test}{X^{k-1}}{ \int u \cdot \rmd p^\test - \int p \cdot  p^\test}{0.}
\end{equation}
For essential rather than natural boundary conditions, one just replaces $X^k \times X^{k-1}$ by $X^k_0 \times X^{k-1}_0$.
We discretize the above problem as:
\begin{equation}
\mixedright{u^\test}{X^k_h}{\langle \rmd u , \rmd u^\test \rangle_h + \langle u^\test, \rmd p \rangle_h}{l_h(u^\test) \sim \langle f, u^\test \rangle, }{p^\test}{X^{k-1}_h}{ \langle u, \rmd p^\test \rangle_h-  \langle p ,  p^\test \rangle_h }{0.}
\end{equation}
For the convergence of such methods one establishes some stability estimates. We define:
\begin{align}
  W_h & = \{u \in X^k_h \ : \ \rmd u = 0\},\\
  V_h & = \{u \in X^k_h \ : \ \forall w \in W_h \quad \langle u, w \rangle = 0\},\\
  \widetilde V_h & = \{u \in X^k_h \ : \ \forall w \in W_h \quad \langle u, w \rangle_h = 0\}.
\end{align}

\begin{proposition}[Poincaré-Friedrichs]
  There exists $C > 0$ such that for all $u \in \widetilde V_h$ we have:
  \begin{equation}
    | u|_h \leq C | \rmd u |_h.
  \end{equation}
\end{proposition}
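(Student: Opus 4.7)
The plan is to reduce the inequality on $\widetilde V_h$ (which is defined via the discrete inner product) to a discrete Poincaré–Friedrichs inequality on the true $\rmL^2$-orthogonal complement $V_h$, and then transfer it back using the uniform equivalence of the two inner products on $X^k_h$.

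First I would establish that there are constants $c_1, c_2 > 0$, independent of the mesh, such that $c_1 |u| \leq |u|_h \leq c_2 |u|$ for every $u \in X^k_h$. This follows cellwise from the definition (\ref{eq:l2h}): since $\gamma_T$ is an $\rmL^2$-projector, Pythagoras gives $|u|^2 = |\gamma_T u|^2 + |u - \gamma_T u|^2$; the first summand coincides with the $\gamma_T$-contribution to $\langle u, u\rangle_h$, while the second is controlled above and below by the stabilization $s_h$, which by hypothesis is uniformly bounded and coercive in the $\rmL^2$ norm.

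Next I would invoke the classical discrete Poincaré–Friedrichs estimate on $V_h$: there exists $C_{\rm PF}$, independent of $h$, such that $|u| \leq C_{\rm PF}|\rmd u|$ for every $u \in V_h$. This rests on the existence of uniformly bounded commuting projections onto the FES $A^\bs$, which are built from the smoothing machinery of \cite[\S 5.3]{ChrMunOwr11} (itself based on \cite{Sch08, Chr07NM}) and which apply to any compatible FES of the kind constructed in section \ref{sec:high}. Given the compatibility and scaling properties recorded in that section, this input is available.

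Finally, let $u \in \widetilde V_h$ and decompose $\rmL^2$-orthogonally as $u = v + w$ with $v \in V_h$ and $w \in W_h$. Since $\rmd w = 0$ we have $\rmd u = \rmd v$, so the previous step gives $|v| \leq C_{\rm PF}|\rmd u|$. For the $W_h$-component, the defining property $\langle u, w\rangle_h = 0$ together with Cauchy–Schwarz for $\langle \cdot, \cdot\rangle_h$ yields $|w|_h^2 = -\langle v, w\rangle_h \leq |v|_h |w|_h$, hence $|w|_h \leq |v|_h$. Combining with the norm equivalence, $|u|_h \leq 2 |v|_h \leq 2 c_2 |v| \leq 2 c_2 C_{\rm PF} |\rmd u| \leq 2 c_2 c_1^{-1} C_{\rm PF} |\rmd u|_h$. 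The main obstacle is the second step: the uniform discrete Poincaré–Friedrichs constant on $V_h$ is a genuinely global property tied to the existence of bounded cochain projections for the new FES; once that input is secured, the passage from $V_h$ to $\widetilde V_h$ is purely formal.
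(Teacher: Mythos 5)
Your proposal is correct and follows essentially the same route as the paper: both take the Poincar\'e--Friedrichs inequality on $V_h$ (via bounded commuting projections) as the external input, pair $u \in \widetilde V_h$ with the $v \in V_h$ satisfying $u - v \in W_h$ (hence $\rmd u = \rmd v$), and close the argument with the uniform equivalence of $|\cdot|$ and $|\cdot|_h$. The only cosmetic difference is that the paper gets $|u|_h \leq |v|_h$ in one step by viewing $u$ as the $\langle\cdot,\cdot\rangle_h$-orthogonal projection of $v$ onto $\widetilde V_h$, whereas you bound the two components separately via Cauchy--Schwarz and pick up a harmless factor of $2$.
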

\begin{proof}
\tpoint  The difficulty is not so much that discrete $\rmL^2$ norms $|\cdot|_h$ appear here, than that it concerns elements of $\widetilde V_h$ rather than $V_h$. We take for granted that the result has been established for $V_h$ by FES methods involving bounded commuting projections.

\tpoint Given $u \in \widetilde V_h$, we find $v \in V_h$ such that $u$ is the projection of $v$ onto $\widetilde V_h$ with respect to $\langle \cdot , \cdot \rangle_h$. This can also be expressed by $u-v \in W_h$, i.e. $\rmd u = \rmd v$.
We then have:
\begin{align}
  |u|_h \leq |v|_h \cleq |v| \cleq |\rmd v| \cleq |\rmd v|_h = |\rmd u|_h. 
\end{align}
Here $a \cleq b$ means $a \leq C b$ for some constant $C$ independent of $h$.
\end{proof}
This, and the corresponding result in $X^{k-1}_h$ instead of $X^k_h$, is enough to establish the stability of the above discrete Hodge-Laplace problem.

\begin{remark}\label{rem:eqV}
  The situation in the above proof is that we have $u \in \widetilde V_h$ and $v \in V_h$, such that $u-v \in W_h$. The space $W_h$ is orthogonal to $V_h$ with respect to $\langle \cdot, \cdot \rangle$ and to  $\widetilde V_h$ with respect to $ \langle \cdot , \cdot \rangle_h$. Thus $v$ is the projection of $u$ onto $V_h$ with respect to $\langle  \cdot, \cdot \rangle$ and $u$ is the projection of $v$ onto $\widetilde V_h$ with espect to $\langle  \cdot, \cdot \rangle_h$. It follows that we have a uniform equivalence of norms:
  \begin{equation}
    | u | \approx | v|. 
  \end{equation}
\end{remark}

For corresponding eigenvalue problems such as those of Maxwell \cite{Bof10,ChrWin13IMA} one needs a stronger result called discrete compactness. Some further notations:
\begin{align}
  W & = \{u \in X^k \ : \ \rmd u = 0\},\\
  V & = \{u \in X^k \ : \ \forall w \in W \quad \langle u, w \rangle = 0\}.
\end{align}
Let $P_V$ be the $\rmL^2$ projection onto $V$. We suppose that we have $\rmL^2$ bounded commuting projections $\Pi_h$ onto $X^k_h$. We start with a $v \in V_h$. We notice that $v$, $P_V v$ and $\Pi_h P_V v$ have the same exterior derivative. So $v - \Pi_h P_V v \in W_h$ is $\rmL^2$ orthogonal to both $v$ and $P_V v$. We deduce: 
\begin{equation}
  |v - P_V v|^2 + |v - \Pi_h P_V v |^2 = |P_V v - \Pi_h P_V v|^2.  
\end{equation}
Hence
\begin{equation}
  |v - P_V v| \leq  |P_V v - \Pi_h P_V v|.
\end{equation}
The space $V$, which is equipped with the norm of $X^k$, encodes also vanishing coderivative. An alternative norm on $V$ is $|\rmd \cdot |$. The injection of $V$ into $\rmL^2$ is compact. One deduces that for some sequence $\epsilon_h \to 0$:
\begin{equation}
  |v - P_V v| \leq \epsilon_h | \rmd v|.
\end{equation}
This gives the vanishing gap condition (concerning the one-sided gap from $V_h$ to $V$ in $X^k$ norm), which is equivalent to discrete compactness \cite[Proposition 3.9]{ChrWin13IMA}. If $(v_h)$ is a sequence in $\widetilde V_h$ with $\rmL^2$ bounded $(\rmd v_h)$, one can get a subsequence converging in the $\rmL^2$ norm, simply by considering $(P_V v_h)$.

We now prove the vanishing gap condition also for $\widetilde V_h$.
 \begin{proposition}[Vanishing gap]
  There is a sequence $\epsilon_h \to 0$ such that when $u \in \widetilde V_h$ and $v \in V_h$ is defined (as before) by $\rmd v = \rmd u$ we have:
  \begin{equation}
    \| u - v\| \leq \epsilon_h \| u\|.
  \end{equation}
\end{proposition}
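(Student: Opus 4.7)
The plan is to use the orthogonality conditions on $u$ and $v$ against $W_h$ to express $|u-v|$ as a consistency error of the discrete product $\langle\cdot,\cdot\rangle_h$, reduce that error via (\ref{eq:l2exact}) to a polynomial approximation estimate, and finally drive the latter to zero by compactness. Since $u-v\in W_h$ we have $\rmd(u-v)=0$, so $\|u-v\|=|u-v|$ throughout.

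First I would derive the key identity. Because $u-v\in W_h$, we have $\langle u,u-v\rangle_h=0$ (as $u\in \widetilde V_h$) and $\langle v,u-v\rangle=0$ (as $v\in V_h$), whence
\begin{equation*}
\langle u-v,u-v\rangle_h \;=\; -\langle v,u-v\rangle_h \;=\; \langle v,u-v\rangle - \langle v,u-v\rangle_h.
\end{equation*}
Combined with the uniform $\rmL^2$-coercivity of $\langle\cdot,\cdot\rangle_h$ built into (\ref{eq:l2h}), this gives $|u-v|^2 \cleq (\langle\cdot,\cdot\rangle - \langle\cdot,\cdot\rangle_h)(v,u-v)$. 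Next, let $\gamma_h v$ denote the cellwise projection with $\gamma_h v|_T = \gamma_T v \in \Poly^r\Lambda^k(T)$. By (\ref{eq:l2exact}) the consistency error vanishes whenever one argument is a cellwise polynomial, so
\begin{equation*}
(\langle\cdot,\cdot\rangle - \langle\cdot,\cdot\rangle_h)(v,u-v) \;=\; (\langle\cdot,\cdot\rangle - \langle\cdot,\cdot\rangle_h)(v-\gamma_h v,\,u-v).
\end{equation*}
Expanding cellwise via (\ref{eq:l2h}) and using boundedness of $s_h$ together with Cauchy--Schwarz, the right side is $\cleq |v-\gamma_h v|\cdot|u-v|$, which yields $|u-v|\cleq |v-\gamma_h v|$.

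The main obstacle is the last step: establishing $|v-\gamma_h v| \leq \epsilon_h\|u\|$ with $\epsilon_h\to 0$, uniformly over $u\in\widetilde V_h$. By Remark \ref{rem:eqV}, $\|v\|\approx\|u\|$, so after normalizing $\|u\|=1$ it suffices to show $|v_h-\gamma_h v_h|\to 0$ along any sequence $v_h\in V_h$ with bounded graph norm. I would combine three ingredients: the vanishing gap already available on $V_h$ yields $|v_h-P_V v_h|\to 0$; compactness of the injection $V\hookrightarrow \rmL^2$ extracts a subsequence with $P_V v_h\to w$ strongly in $\rmL^2$ for some $w\in V$; density of piecewise polynomials in $\rmL^2$ under mesh refinement gives $|w-\gamma_h w|\to 0$. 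A triangle inequality, together with the $\rmL^2$-contractivity of $\gamma_h$, then yields $|v_h-\gamma_h v_h|\to 0$, and the subsequence statement is converted to the uniform bound $\epsilon_h\to 0$ by a standard contradiction argument on the supremum. The delicate point here is the last density estimate, which relies on $\gamma_h$ being a projector onto a space containing $\Poly^r\Lambda^k$ cellwise, so that standard approximation on shrinking polytopal meshes applies.
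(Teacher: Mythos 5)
Your proof is correct, but it takes a genuinely different route through the middle of the argument. The paper introduces the auxiliary operator $Q_h:X^k\to X^k_h$ defined by $\langle Q_h t, t'\rangle_h=\langle t,t'\rangle$, writes $|u-v|_h^2=\langle u-v,\,Q_hP_Vu-v\rangle_h$ (using that both $u$ and $Q_hP_Vu$ lie in $\widetilde V_h$ while $u-v\in W_h$), and then splits $|Q_hP_Vu-v|\leq|Q_hP_Vu-P_Vu|+|P_Vv-v|$, invoking approximation properties of $Q_h$ and the already-established gap estimate on $V_h$. You instead express $|u-v|_h^2$ directly as the consistency defect $(\langle\cdot,\cdot\rangle-\langle\cdot,\cdot\rangle_h)(v,u-v)$, kill the polynomial part via (\ref{eq:l2exact}) to reduce to $|v-\gamma_h v|\,|u-v|$, and then drive $|v-\gamma_h v|$ to zero by combining the $V_h$ gap estimate, compactness of $V\hookrightarrow\rmL^2$, and density of piecewise polynomials. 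Both proofs ultimately rest on the same two pillars (the vanishing gap on $V_h$ and compactness), but your version makes the role of the consistency property (\ref{eq:l2exact}) explicit and avoids introducing $Q_h$, at the price of an extra polynomial-approximation step; the paper's version is shorter but hides the consistency argument inside the unproved ``approximation properties'' of $Q_h$. Two small points to tighten: (a) $\gamma_h v$ is only cellwise polynomial and need not belong to $X^k_h$ globally, so you should note that $\langle\cdot,\cdot\rangle_h$ is assembled cell by cell and that $\Poly^r\Lambda^k(T)\subseteq A^k(T)$, which legitimizes evaluating the local forms at $\gamma_T v$; (b) in the final compactness step you need $P_Vv_h$ bounded in the graph norm of $V$, which follows since $\rmd P_Vv_h=\rmd v_h$ and $V$ is normed by $|\rmd\cdot|$ --- worth stating explicitly.
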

\begin{proof}
  Let $Q_h$ be the map $X^k \to X^k_h$ defined so that, for $t' \in X^k_h$:
  \begin{equation}
    \langle Q_h t,t' \rangle_h = \langle t, t'\rangle. 
  \end{equation}
  It is not a projection, but it maps $V$ to $\widetilde V_h$ and satisfies approximation properties (by stability and consistence).
  
We have:
\begin{align}
  |u - v|^2_h & = \langle u - v, Q_h P_V u - v \rangle_h \leq | u-v |_h |Q_h P_V u - v|_h.
\end{align}
We get, remarking that $P_V u = P_V v$ (they have the same exterior derivative):
\begin{align}
  |u -v| & \cleq |Q_h P_V u - v| \leq |Q_h P_V u - P_V u| + |P_V v - v|,\\
  & \cleq \epsilon_h | \rmd u|,
\end{align}
using the already established results.
\end{proof}

\begin{remark}
  This proposition can be interpreted as saying that the symmetrized gap $\delta(\widetilde V_h, V_h)$, computed in the norm $\| \cdot \|$ of $X^k$, converges to $0$ as $h \to 0$. On the other hand Remark \ref{rem:eqV} can be interpreted as saying that the symmetrized gap $\delta(\widetilde V_h, V_h)$ computed in the $\rmL^2$ norm $| \cdot|$ stays away from 1.
\end{remark}

\bibliography{../Bibliography/alexandria,../Bibliography/newalexandria,../Bibliography/mybibliography}{}
\bibliographystyle{plain}

\end{document}